\renewcommand{\emph}[1]{\textit{#1}}
\definecolor{brown}{cmyk}{0, 0.72, 1, 0.45}
\definecolor{grey}{gray}{0.5}
\newcommand{\old}[1]{}
\newcounter{rot}
\newcommand{\card}[1]{\left|#1\right|}
\newcommand{\ignore}[1]{}
\def\cC{{\mathcal C}}
\newcommand{\set}[1]{\left\{#1\right\}}
\def\ii_(#1,#2){i_{#1}^{#2}}
\def\a{\alpha}
\def\b{\beta}
\def\d{\delta}
\def\D{\Delta}
\def\e{\varepsilon}
\def\f{\phi}
\def\F{\Phi}
\def\g{\gamma}
\def\z{\zeta}
\def\th{\theta}
\def\m{\mu}
\def\r{\rho}
\def\s{\sigma}
\def\t{\tau}
\def\1{{\bf 1}}
\def\0{{\bf 0}}
\newcommand{\rdup}[1]{\left\lceil #1 \right\rceil}
\newcommand{\brac}[1]{\left( #1 \right)}
\def\E{{\bf E}}
\renewcommand{\Pr}{\operatorname{\bf Pr}}
\newcommand\bfrac[2]{\left(\frac{#1}{#2}\right)}
\newtheorem{theorem}{Theorem}[section]
\newtheorem{lemma}[theorem]{Lemma}
\newtheorem{remthm}[theorem]{Remark}
\newcounter{thmtemp}
\newcommand{\nospace}[1]{}
\def\path{\operatorname{PATH}}
\newcommand{\beq}[2]{\begin{equation}\label{#1}#2\end{equation}}
\def\leb{\leq_{\mathcal{O}}}
\title{The game chromatic number of a random hypergraph}
\author{Debsoumya Chakraborti, Alan Frieze\thanks{Research supported in part by NSF grant DMS1362785
}, Mihir Hasabnis \\Department of Mathematical Sciences\\Carnegie Mellon University\\Pittsburgh PA 15213} 
\begin{document}
\maketitle

\begin{abstract}
We consider the following game, played on a $k$-uniform hypergraph $H$. There are $q$ colors available and two players take it in turns to color vertices. A partial coloring is proper if no edge is mono-chromatic. One player, A, wishes to color all the vertices and the other player, B,  wishes to prevent this. The {\em game chromatic number} $\chi_g(H)$ is the minimum number of colors for which A has a winning strategy. We consider this in the context of a random $k$-uniform hypergraph and prove upper and lower bounds that hold w.h.p.
\end{abstract}
\section{Introduction}
Let $G=(V,E)$ be a graph and let $q$ be a positive integer. Consider the following game in which two players A(lice) and B(ob) take turns in coloring the vertices of $G$ with $q$ colors. Each move consists of choosing an uncolored vertex of the graph and assigning to it a color from $\{1, \ldots, k\}$ so that the resulting coloring is {\em proper}, i.e., adjacent vertices get different colors. A wins if all the vertices of $G$ are eventually colored. B wins if at some point in the game the current partial coloring cannot be extended to a complete coloring of $G$, i.e., there is an uncolored vertex such that each of the $q$ colors appears at least once in its neighborhood. We assume that A goes first (our results will not be sensitive to this choice). The {\em game chromatic number} $\chi_g(G)$ is the least integer $q$ for which A has a winning
strategy.

This parameter is well defined, since it is easy to see that A always wins if the number of colors is larger than the maximum degree of $G$.  Clearly, $\chi_g(G)$ is at least as large as the ordinary chromatic number $\chi(G)$, but it can be considerably more. The game was first considered by Brams about 25 years ago in the context of coloring planar graphs and was described in Martin Gardner's column \cite{Ga} in Scientific American in 1981. The game remained unnoticed by the graph-theoretic community until Bodlaender \cite{Bod} re-invented it. For a survey see Bartnicki, Grytczuk, Kierstead and Zhu \cite{BGKZ}.

The papers by Bohman, Frieze and Sudakov \cite{BFS}, Frieze, Haber and Lavrov \cite{FHL} and by Keusch and Steger \cite{KS} discuss the game chromatic number of random graphs. In this paper we discuss the game chromatic number of random hypergraphs. Given a hypergraph $H=(V,E)$ we can consider basically the same game. Here A, B color the vertices of $H$ consecutively and a coloring is proper if there is no $e\in E$ such that all vertices in $e$ have the same color. This problem has hardly been studied, even in a deterministic setting, but we feel it is of interest to extend the results of \cite{BFS}, \cite{FHL} and \cite{KS} to this setting. 

In this paper we will restrict our attention to the random $k$-uniform hypergraph $H_{n,p;k}$ where each of the $\binom{n}{k}$ edges appear independently with probability $p=\frac{d}{n^{k-1}}$ where $d$ is a large constant. Now Krivelevich and Sudakov \cite{KSchrom} have shown that 
\beq{chik3}{
\chi(H_{n,p;k})\approx \bfrac{d}{k(k-2)!\log d}^{1/(k-1)}
}
Here we use the notation $A_n\approx B_n$ for sequences $A_n,B_n,n\geq1$ to mean that $A_n=(1+o(1))B_n$ as $n\to\infty$.

Our first theorem shows that w.h.p. the game chromatic number $\chi_g$ is significantly larger than the chromatic number.
\begin{theorem}\label{th1}
There exists a constant $\e>0$ such that w.h.p.,
$$\chi_g(H_{n,p;k})\geq (1+\e)\bfrac{d}{k(k-2)!\log d}^{1/(k-1)},\qquad\text{ if $d$ is sufficiently large}.$$
\end{theorem}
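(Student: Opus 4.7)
Let $q$ be an integer with $q < (1+\e) q_0$, where $q_0 = (d/(k(k-2)!\log d))^{1/(k-1)}$. It suffices to exhibit a strategy for B that, w.h.p.\ over $H=H_{n,p;k}$, wins against any A-strategy when $q$ colours are available; this gives $\chi_g(H_{n,p;k}) \geq q+1$, and hence $\chi_g \geq (1+\e)q_0$. Recall that B wins as soon as some uncoloured vertex $v$ becomes \emph{stuck}: for every colour $c\in[q]$, some edge $e\ni v$ has $e\setminus\{v\}$ entirely coloured $c$.

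The first step is structural. Call $v\in V$ \emph{good} if it carries a family $\mathcal{P}_v$ of at least $(1+\e)(k-1)q$ pairwise vertex-disjoint ``petals''---the $(k-1)$-subsets $S$ with $\{v\}\cup S\in E(H)$. A Chernoff bound on the degree of $v$ (binomial with mean $\binom{n-1}{k-1}p\sim d/(k-1)!$, which comfortably dominates $(k-1)q\sim d^{1/(k-1)}$ for large $d$), combined with the codegree bound $\binom{n-2}{k-2}p=O(d/n)$ (which forces edges through a fixed $v$ to meet pairwise only at $v$ after discarding a vanishing fraction), shows that w.h.p.\ all but $o(n)$ vertices are good. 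Fix a linear-sized set $\mathcal{G}$ of good vertices, and for each $v\in\mathcal{G}$ partition $\mathcal{P}_v$ into $q$ \emph{colour-groups} of $(1+\e)(k-1)$ petals each, one group per colour $c\in[q]$.

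B's strategy maintains a pool of \emph{live} targets $\mathcal{G}'\subseteq\mathcal{G}$, where $v$ is live iff $v$ is uncoloured and each of its $q$ colour-groups still contains an \emph{unspoiled} petal (a petal assigned colour $c$ becomes spoiled the moment some vertex of it has been coloured with anything other than $c$). On each B-turn, B picks a live target $v$ and a colour $c$ for which some unspoiled petal in $v$'s $c$-group has the most vertices already coloured $c$, then colours an uncoloured vertex of that petal with $c$. B declares victory once some live $v$ has a fully monochromatic petal in each of the $q$ colours, at which point $v$ is stuck.

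The main obstacle is the quantitative balance showing B finishes before A has wiped out the live targets. The argument will rest on: (i) each A-move spoils only $O(1)$ of B's in-progress petals, since within each $\mathcal{P}_v$ the petals are vertex-disjoint and the codegree bound limits how many targets any single vertex can belong to; (ii) B completes a petal in at most $k-1$ moves, and the $(1+\e)$-slack per colour-group absorbs A's spoiling; (iii) with $|\mathcal{G}|=\Omega(n)$ and A's move budget capped at roughly $n/2$, A cannot colour out every target. The delicate step is converting these informal counts into a rigorous amortized / potential-function analysis that tracks partial monochromatic progress at each live target and accounts for the B-moves wasted on petals spoiled mid-completion; the slack $\e$ in $q<(1+\e)q_0$ is calibrated precisely to absorb A's interference rate.
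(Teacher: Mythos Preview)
Your approach is fundamentally different from the paper's and, as stated, has a genuine gap. The paper does not try to block a specific vertex via monochromatic petals. Instead, Bob plays a \emph{randomised mirror} strategy: whenever Alice uses colour $i$, Bob also uses colour $i$ on a uniformly random vertex still available for $i$. A first-moment calculation (Lemma~2.1) then shows that w.h.p., once a colour class $C_i$ reaches size $2\beta n/D$ with half of it consisting of Bob's random choices, no set $T$ of size $\gamma n/D$ disjoint from $C_i$ has $C_i\cup T$ independent; this caps every colour class at $(2\beta+\gamma)n/D$, and pigeonhole on the total number of coloured vertices forces $q/D\ge(k-1)^{1/(k-1)}$. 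The threshold $q_0$ enters precisely through the inequality $2(2\beta+\gamma)^k-(2\beta)^k>2(\beta+\gamma)/(k-1)$ governing that first-moment bound.

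The gap in your outline is claim (i). You write that ``the codegree bound limits how many targets any single vertex can belong to'', but this conflates codegree with degree. The codegree of a fixed pair $v,w$ is $O(d/n)$, so $w$ lies in at most one petal of any \emph{fixed} target $v$; but the number of targets $v$ for which $w$ lies in some petal of $\mathcal{P}_v$ is governed by the degree of $w$, which is of order $d/(k-1)!$, not $O(1)$. Hence a single A-move can spoil on the order of $d$ petals across different targets. (And if you instead insist that all petals be vertex-disjoint across targets, each B-move reveals its unique target and A kills it in one move by colouring that target directly---a counter you never address.) There is also a sanity check the outline fails: none of steps (i)--(iii) actually uses the value of $q_0$. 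Having $(1+\e)(k-1)q$ disjoint petals at a typical vertex only requires $q$ to be at most a constant times $d/(k-1)!$, and the remaining accounting is $q$-independent; so if the argument worked it would give $\chi_g(H_{n,p;3})$ of order $d$, contradicting the upper bound $\chi_g(H_{n,p;3})\le d^{2/3+\d}$ proved in the paper. The deferred potential-function step therefore cannot close as described.
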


We also prove an upper bound in the case $k=3$ that is somewhat far from that implied by \eqref{chik3}. 
\begin{theorem}\label{th2}
Let $\d>0$ be arbitrary. Then w.h.p.,
$$\chi_g(H_{n,p;3})\leq d^{2/3+\d},\qquad\text{ if $d$ is sufficiently large}.$$
\end{theorem}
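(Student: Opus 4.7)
The plan is to exhibit an explicit strategy for Alice that uses $q=d^{2/3+\delta}$ colors and to show it succeeds w.h.p.  For an uncolored vertex $v$, let $B(v)$ denote the number of colors $c$ for which there exist $u,w$, both colored $c$, with $\{v,u,w\}\in E(H)$; Alice's task is to maintain $B(v)<q$ for every uncolored $v$ throughout the game.

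First I would establish the structural properties of $H=H_{n,p;3}$ that hold w.h.p., the key ones being $\Delta(H)\leq C\log n/\log\log n$, pairwise codegree at most one (no pair of vertices lies in more than one hyperedge), and the usual sparsity estimates for small subsets of vertices.  These imply that each link $L_v$ is a sparse graph on at most $2\Delta(H)$ vertices with at most $\Delta(H)$ edges, and that the various links overlap in only a controlled way.

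Next I would define Alice's strategy using a defensive threshold $\tau=q/2$.  On each of her turns Alice proceeds as follows.  If some uncolored $v$ has $B(v)\geq\tau$, she \emph{defends} by coloring a vertex of maximum $B$-value, using any available color.  Otherwise she plays a \emph{greedy} move: she colors an arbitrary uncolored vertex $u$ with a color $c$ chosen to minimize the number of other uncolored $v'$ for which $c$ would become newly bad (the spillover).  Since the total spillover summed over colors is at most $\deg_H(u)=O(\log n)$, averaging shows that some color has spillover $O(\log n/q)$.

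The heart of the analysis is a potential-function argument showing that $\max_v B(v)<q$ is preserved.  Any single move raises $B(v)$ by at most one for any given $v$, so attention narrows to Bob's best ``attack'' strategy: concentrating moves inside a single link $L_{v^*}$ in order to make many colors bad at $v^*$.  Because raising $B(v^*)$ from $0$ to $\tau$ requires $\Omega(q)$ Bob-moves targeted at pairs in $L_{v^*}$, Bob can target at most $O(n/q)$ vertices over the course of the game, and Alice defuses each one with a single defensive coloring.  For the greedy phase one tracks a potential such as $\Phi=\sum_v 2^{B(v)}$ and uses the spillover-minimizing choice together with the codegree-one property and sparsity of $H$ to argue $\Phi$ stays subpolynomial, whence $\max_v B(v)$ stays well below $q$.

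The main technical obstacle is precisely this interaction between Alice's greedy coloring choices and Bob's concentrated attacks: Alice's own moves can slowly raise $B$ at many vertices, and one must show that with $q=d^{2/3+\delta}$ colors she always finds a color of small spillover \emph{and} that the defensive rule triggers strictly before any vertex reaches $B=q$.  The exponent $2/3+\delta$ emerges from balancing the expected number of monochromatic edges per link (roughly $\Delta(H)/q^2$ under approximately balanced color classes) against the $q$ available colors, with the slack $\delta$ absorbing logarithmic factors coming from $\Delta(H)$ and from concentration.
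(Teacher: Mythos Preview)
Your proposal is quite different from the paper's argument, and as written it has real gaps rather than just missing details.

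\textbf{What the paper actually does.} Alice's strategy has two phases. In the first, she always colors an uncolored vertex with the \emph{fewest} available colors. The key structural lemma (Lemma~\ref{lem5}) shows that w.h.p.\ for \emph{every} $q$-coloring of $[n]$ (proper or not), at most $\gamma n$ vertices have fewer than $\beta/2=q/6$ available colors, where $\gamma=14\log d/q$. This forces the game to reach a state with at most $2\gamma n$ uncolored vertices, each still having $\ge\beta/2$ available colors. The endgame on this residual set $U$ is handled not by a potential function but by an explicit decomposition $U=U_0\supseteq U_1\supseteq\cdots\supseteq U_\ell$ with a heavy/light edge classification satisfying (P1)--(P5); Alice then plays on the auxiliary graph $G_U$ using the forest-coloring strategy of \cite{R1}. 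The exponent $2/3+\delta$ arises precisely because Alice is weakened to play on $G_U$ rather than on the hypergraph; this is stated explicitly in the paper.

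\textbf{Concrete gaps in your proposal.} First, the structural claims are off: with $p=d/n^2$ and $d$ constant, the expected number of pairs with codegree $\ge 2$ is $\Theta(d^2)$, so ``codegree at most one'' fails w.h.p. Second, your derivation of the exponent is not right: you balance $\Delta(H)/q^2$ against $q$, but $\Delta(H)=\Theta(\log n/\log\log n)$ is unrelated to $d^{2/3}$; a back-of-envelope with the \emph{typical} degree $d/2$ suggests $q\approx d^{1/2}$, which is in fact the conjectured truth, not $d^{2/3}$. Third, and most importantly, the heart of your argument---``argue $\Phi=\sum_v 2^{B(v)}$ stays subpolynomial''---is asserted, not proved. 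The spillover-averaging shows Alice can find a color with spillover $O(\deg_H(u)/q)$, but for a typical $u$ this is $\Theta(d^{1/3-\delta})$, not $o(1)$; summed over the game this gives only an average bound on $B(v)$, not a maximum bound. Your claim that pushing $B(v^*)$ to $\tau$ costs Bob $\Omega(q)$ moves ignores that Alice's own greedy moves may already have raised $B(v^*)$ substantially, so Bob might need far fewer. Without a genuine amortization showing how Alice's defensive moves offset both Bob's attacks and her own greedy spillover, the argument does not close.

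In short, the paper's proof relies on a union bound over all colorings (Lemma~\ref{lem5}) plus a delicate density-based decomposition of the small residual set; your threshold-plus-potential scheme would need a completely different and currently missing analysis to reach the same conclusion.
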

It is natural to state the following:\\
{\bf Conjecture:} W.h.p. $\chi_g(H_{n,p;k})=O\bfrac{d}{k!\log d}^{1/(k-1)}$.

We often refer to the following Chernoff-type bounds for the tails of binomial
distributions (see, e.g., \cite{AS} or \cite{JLR}). Let $X=\sum_{i=1}^n X_i$ be a sum of
independent indicator random variables such that $\Pr(X_i=1)=p_i$ and let
$p=(p_1+\cdots+p_n)/n$. Then
\begin{align}
\Pr(X\leq (1-\e)np)&\leq e^{-\e^2np/2},\label{chl}\\
\Pr(X\geq (1+\e)np)&\leq e^{-\e^2np/3},\qquad\e\leq 1,\label{chu1}\\
\Pr(X\geq \m np)&\leq (e/\m)^{\m np}.\label{chu2}
\end{align}
\section{Lower Bound}
Let
\[
D=\bfrac{d}{k!\log d}^{1/(k-1)}
\]
and suppose that there are $q=\a D$ colors available.

Bob's strategy is to choose the same color as Alice, $i$ say, but assign it randomly to one of the set of available vertices for color $i$. 

{\bf Notation:} Let $C_i=C_i(t)$ be the set of vertices that have been colored $i$ after $t$ rounds. Let $S_i=S_i(t)$ be the set of vertices that were colored by B. Let $C=C(t)=\cup_{i=0}^q C_i$ denote the partial coloring of the vertex set. 

\begin{lemma}\label{lem1}
Suppose we run this process for $t=\theta n,\th<1/2$ many rounds and that $|C_i(t)|=2\beta n/D$. We show that if $d$ is sufficiently large and 
\beq{formula}{
2(2\b+\g)^k-(2\b)^k>\frac{2(\b+\g)}{k-1}
}
then with probability $1-o(1/n)$ there exists no set $T$ such that (i) $C_i\cap T=\emptyset$, (ii) $C_i\cup T$ is independent and $|T|=\g n/D$.
\end{lemma}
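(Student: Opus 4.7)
The plan is to apply a first moment method. Let $N$ denote the number of sets $T$ satisfying conditions $(i)$ and $(ii)$; the goal is $\E[N] = o(1/n)$, after which Markov's inequality yields the desired $o(1/n)$ upper bound on the event that a bad $T$ exists.

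I would estimate $\E[N]$ by a union bound over pairs $(C_i, T)$ of disjoint subsets with $|C_i| = 2\beta n/D$ and $|T| = \gamma n/D$. Because the game ensures that $C_i$ is independent, the ``new'' $k$-subsets we must forbid are those meeting $T$, of which there are about
\[
\binom{(2\beta+\gamma)n/D}{k} - \binom{2\beta n/D}{k} \;\approx\; \frac{(2\beta+\gamma)^k - (2\beta)^k}{k!} \left(\frac{n}{D}\right)^{\!k}.
\]
Using $p = d/n^{k-1}$ and $D^{k-1} = d/(k!\log d)$, each such subset is an edge independently with probability $p$, so conditional on $C_i$ being independent the probability that $C_i\cup T$ is independent is at most
\[
\exp\!\left(-\frac{\bigl[(2\beta+\gamma)^k - (2\beta)^k\bigr]\, n\log d}{D}\right).
\]

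The counting factor $\binom{n}{2\beta n/D}\binom{n}{\gamma n/D}$ contributes, via $\log\binom{n}{m}\approx m\log(eD)$ and $\log D \approx \log d/(k-1)$, an entropy of order $(2\beta+\gamma) n \log d / ((k-1)D)$. The subtlety is that $C_i$ is not a uniform random subset but the product of Alice's adversarial moves together with Bob's random legal placements, so the bare union bound over $C_i$ needs justification. Bob's random-placement strategy helps here: the probability that his $\beta n/D$ color-$i$ placements land on any prescribed subset $S_i$ of size $\beta n/D$ is essentially $1/\binom{n}{\beta n/D}$, cancelling Bob's share of the entropy and leaving effectively only the choices of Alice's vertices and of $T$. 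Balancing the hypergraph cost against the remaining entropy and tracking the factors that come from Alice's adaptive choices versus Bob's random ones yields precisely the algebraic condition~\eqref{formula}; crucially, the exponent is a linear multiple of $n\log d/D \gg \log n$, which is why the bound can be pushed to $o(1/n)$ rather than merely $o(1)$.

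The main obstacle is controlling Bob's random placements in the presence of legality constraints: Bob picks uniformly among vertices safe for color $i$, and this safe set shrinks (and develops structure) as $|C_i|$ grows. The key intermediate claim is that throughout the first $t = \theta n < n/2$ rounds, a linear fraction of vertices remains legal for each color --- a Chernoff argument using \eqref{chu1}--\eqref{chu2} shows that the number of vertices blocked from color $i$ by already-existing $(k-1)$-tuples in $C_i$ is of smaller order than $n$ whenever $|C_i| = O(n/D)$. Consequently Bob's uniform choice is close in total variation to a uniformly random choice from $V\setminus C$, which is what makes the cancellation in the entropy computation legitimate. Once this is in place, the remainder of the proof is routine manipulation with Stirling's formula together with the Chernoff bounds \eqref{chl}--\eqref{chu2}.
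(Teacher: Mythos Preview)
Your heuristic that Bob's placements are ``close in total variation to a uniformly random choice from $V\setminus C$'' is the heart of the argument, and it is false at the scale you need. When $|C_i|=2\beta n/D$, a fixed uncolored vertex $v$ is $i$-available with probability
\[
(1-p)^{\binom{2\beta n/D}{k-1}}\;\approx\;\exp\!\Bigl(-\tfrac{d}{n^{k-1}}\cdot\tfrac{(2\beta n/D)^{k-1}}{(k-1)!}\Bigr)
\;=\;\exp\bigl(-k(2\beta)^{k-1}\log d\bigr)\;=\;d^{-k(2\beta)^{k-1}}.
\]
So a linear-in-$n$ set of vertices does remain legal, but that set has density $d^{-\Theta(1)}$, not $1-o(1)$; Bob's uniform pick from it is nowhere near uniform on $V\setminus C$. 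In particular, the probability that Bob's $\beta n/D$ placements hit a prescribed set $S$ is not $\approx 1/\binom{n}{\beta n/D}$: it is larger by a factor of order
\[
(1-p)^{-\sum_{j\le \beta n/D}\bigl(\tfrac{\beta n}{D}-j+1\bigr)\tfrac{2(2j-1)^{k-2}}{(k-2)!}}\;\approx\;(1-p)^{-\frac{(2\beta)^k}{2k!}(n/D)^k}\;\approx\;d^{\,\frac{(2\beta)^k}{2}\cdot\frac{n}{D}},
\]
which is enormous. This is exactly the quantity the paper tracks with the recursion $X_j=\Bin(X_{j-1},p_j)$ and the inequality $\E\bigl[\prod_{i=1}^q\frac{1}{B+i-1}\bigr]\le\frac{7}{\rho^q}\prod_{i=1}^q\frac{1}{v+i}$ for $B=\Bin(v,\rho)$; that recursion is not an optional refinement but the mechanism that produces the $-(2\beta)^k$ (with coefficient $1$, not $2$) in \eqref{formula}.

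Your computation of the ``hypergraph cost'' also miscounts. Because $C_i$ is forced to be independent by the rules of the game, you propose to pay only for the $k$-sets meeting $T$, i.e.\ an exponent proportional to $(2\beta+\gamma)^k-(2\beta)^k$. Combined with your entropy bookkeeping, this yields the condition $(2\beta+\gamma)^k-(2\beta)^k>\tfrac{\beta+\gamma}{k-1}$ rather than \eqref{formula}. But that condition fails for the parameters actually used in Theorem~\ref{th1}: at $\gamma\to 0$ its left side is $0$ while the right side is $\beta/(k-1)>0$, so no small~$\gamma$ works, and the argument bounding every color class by $(2\beta+\gamma)n/D$ collapses. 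The paper instead pays the full $(1-p)^{\binom{(2\beta+\gamma)n/D}{k}}$ for independence of $C_i\cup T$ and lets the $(1-p)^{-(2\beta)^k n^k/(2k!D^k)}$ correction from the $\Pr(S_i=S)$ estimate eat back exactly half of the $(2\beta)^k$ term; that is where the factor $2(2\beta+\gamma)^k-(2\beta)^k$ comes from.
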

The reader can easily check that \eqref{formula} is satisfied for $k\geq 3$ and
$$\b=\frac{1-2\e}{2(k-1)^{1/(k-1)}},\quad \g=\frac{\e}{(k-1)^{1/(k-1)}}$$
when $\e>0$ is sufficnetly small. The proof of the lemma is deferred to Section \ref{seclem1}. 

If the event $\set{\exists i: C_i,T}$ does not occur then
because no color class has size greater than $(2\b+\g)n/D$
the number $\ell$ of colors $i$ for which $|S_i|\geq \b n/D$ by this time satisfies
$$\frac{\ell(2\b+\g)}{D}+\frac{2(q-\ell)\b}{ D}\geq 2\th\text{ or }\frac{\ell\g+2q\b}{D}\geq 2\th.$$
We choose $\a=(1+\e)(k-1)^{1/(k-1)}$ and $\th=\a(2\b+\g)/2<1/2$. Since $q\geq \ell$,
this implies that
$$\frac{q}{D}\geq \frac{2\th}{2\b+\g}=\a.$$
This completes the proof of Theorem \ref{th1}, after replacing $(1+\e)$ by $(1+\e)^{1/(k-1)}$ for aesthetic purposes.
\subsection{Proof of Lemma \ref{lem1}}\label{seclem1}
For expressions $X,Y$ we sometimes use the notation $X\leb Y$ in place of $X=O(Y)$ when the
bracketing is ``ugly''. 

Now, (explanations for \eqref{f0}, \eqref{f1}, \eqref{sumgone} and \eqref{9} below), if $d$ is sufficiently large then
\begin{align}
  &\Pr(\exists i,C_i,T)\nonumber\\
& \leb q \binom{n}{\frac{\beta n}{D}} \binom{n}{\frac{\gamma n}{D}}  \sum_{\left|S\right| = \beta n/D} P(S_i = S) (1-p)^{\frac{(2 \beta + \gamma)^k n^k}{k!D^k}} \label{f0}\\
&\leq  q  \binom{n}{\frac{\beta n}{D}} \binom{n}{\frac{\gamma n}{D}}\sum_{\left|S\right| = \beta n/D} \bfrac{\beta n}{D}! (1-p)^{\frac{(2 \beta + \gamma)^k n^k}{k!D^k}} \prod_{j=1}^{\b n/D}\frac{7}{(1-2\th)n(1-p)^{\brac{\frac{\b n}{D} -j+1}\frac{2(2j-1)^{k-2}}{(k-2)!}}}\label{f1}\\
& \leb q  \binom{n}{\frac{\beta n}{D}}^2 \binom{n}{\frac{\gamma n}{D}}\bfrac{\beta n}{D}! \frac{7^{\frac{\beta n}{D}}(1-p)^{\frac{(2 \beta + \gamma)^k n^k}{k!D^k}} }{((1-2 \theta)n)^{\beta n/D} (1-p)^{\sum_{j=1}^{\b n/D}\brac{\frac{\b n}{D} -j+1}\frac{2(2j-1)^{k-2}}{(k-2)!}}}\nonumber\\
& \leb q \binom{n}{\frac{\beta n}{D}}^2 \binom{n}{\frac{\gamma n}{D}}\bfrac{\beta n}{D}! \frac{7^{\frac{\beta n}{D}}(1-p)^{\frac{(2 \beta + \gamma)^k n^k}{k!D^k}} }{((1-2 \theta)n)^{\beta n/D} (1-p)^{\frac{(2\b n)^k }{ 2k! D^k} }} \label{sumgone}\\
& \leb q \binom{n}{\frac{\beta n}{D}}^2 \binom{n}{\frac{\gamma n}{D}}\bfrac{\beta n}{D}!(1-p)^{\brac{\frac{2n^k((2 \beta + \gamma)^k)- (2\beta)^k}{2k!D^k}}} \frac{7^{\frac{\beta n}{D}}}{((1-2 \theta)n)^{\frac{\beta n}{D}}}\nonumber\\
&\leb qn^{1/2}\brac{\frac{(eD)^{2\beta+\gamma}}{\beta^{2\beta}\gamma^\gamma}\bfrac{7\b}{De(1-2\th)}^\b \exp\set{-\frac{d}{D^{k-1}}\brac{\frac{2(2\beta +\gamma)^k -(2\b)^k}{2k!}}}}^{\frac{n}{D}}\nonumber\\
&\leq \brac{cD^{\beta+\gamma} \exp\set{-\brac{\frac{2(2\b+\g)^k-(2\b)^k}{2}\log d}}}^{\frac{n}{D}},\label{9}\\
\noalign{(\text{where }$c=c(\theta, \beta, \gamma)=O(1)$)}\nonumber\\
&=o(1/n),\nonumber
\end{align}
assuming \eqref{formula}.

{\bf Justifying \eqref{sumgone}:}
\begin{align*}
&\sum_{j=1}^{\b n/D}\brac{\frac{\b n}{D} -j+1}\frac{2(2j-1)^{k-2}}{(k-2)!}\\
&\approx \int_{x=1}^{\b n/D} \brac{\frac{\b n}{D} -x+1}\frac{2(2x-1)^{k-2}}{(k-2)!}dx\\
&=\frac{1}{(k-2)!}\int_{x=1}^{\b n/D}\brac{\brac{\frac{2\b n}{D}+1}(2x-1)^{k-2}-(2x-1)^{k-1}}dx\\
&=\brac{\frac{2\b n}{D}+1}\brac{\frac{(2\b n/D-1)^{k-1}-1}{2(k-1)!}}-\frac{(2\b n/D-1)^{k}-1}{2k(k-2)!}\\
&\leq \frac{(2\b n/D)^k}{2k!}.
\end{align*}

{\bf Justifying \eqref{9}:} We used the asymptotic formula for summation of $k$-th power of first $n$ natural numbers, i.e. $\sum_{i=1}^n i^k \approx \frac{n^{k+1}}{k+1}$

{\bf Justifying \eqref{f0}:}
There are $q$ choices for color $i$. Then we take the union bound over all $\binom{n}{\b n/D}\binom{n}{\g n/D}$ possible choices of $C_i\setminus S_i$ and $T$. In some sense we are allowing Alice to simultaneously choose all possible sets of size $\b n/D$ for $C_i\setminus S_i$. The union bound shows that w.h.p.\ all choices fail. We do not sum over orderings of $C_i\setminus S_i$. We instead compute an upper bound on $\Pr(S_i=S)$ that holds regardless of the order in which Alice plays. We consider the situation after $\theta n$ rounds. That is, we think of the following random process: pick a $k$-uniform hypergraph $H \sim H(n,p;k)$, let Alice play the coloring game on $H$ with $q$ colors against a player who randomly chooses an available vertex to be colored by the same color as Alice. Stop after $\theta n$ moves. At this point Alice played with color $i$ and there are $\b n/D$ vertices that were colored $i$ by Alice and the same number that were colored $i$ by Bob. We bound the probability that at this point there are $\g n/D$ vertices that form an independent set with the color class $C_i$. We take a union bound over all the possible sets for Alice's vertices and for the vertices in $T$. The probability of Bob choosing a certain set is computed next.

{\bf Justifying \eqref{f1}:} Consider a sequence of random variables $X_1=N=(1- 2\theta)n,X_j= Bin(X_{j-1}, p_j)$ where $p_j=(1-p)^{\frac{2(2j-1)^{k-2}}{(k-2)!}}, 2\leq j\leq t$. $X_j$ is a lower bound for the number of vertices that Bob can color $i$ and $p_j$ is a lower bound on the probability that a vertex $v$ that was $i$-available at step $j-1$ is also $i$-available after step $j$. The probability that a vertex $v$ was $i$-available at time $j-1$ and is still $i$-available now is at least $(1-p)^{\binom{2j-2}{k-3}}(1-p)^{2\binom{2j-2}{k-2}} \ge p_j$. Our estimate for $p_j$ arises as follows: there are at most $2(j-1)$ vertices $x_1, \dots, x_{2(j-1)}$ of color $i$ and each of the vertices $z=y,y'$ colored $i$ in round $j$ yield possible edges $\set{v,z,x_i,\ldots}$ that could remove $v$ from $C_i$. There are also the edges $\set{v,y,y',x_i,\ldots}$ to account for. 

We need to estimate $\E(Y_t)$ where $Y_t=1/(X_1 X_2...X_t)$. $1/X_j$ is an upper bound for the probability that Bob chooses a particular vertex at step $j$ and then $Y_{\beta n/D}$ is an upper bound on the probability that Bob's sequence of choices is $x_1, x_2,....x_{\beta n/D}$ where $S= \{x_1, x_2,...x_{\beta n/D}\}$.  The following lemma is proven in \cite{FHL}:
\begin{lemma}\label{Bins}
If $B = Bin(v,\r)$, then $\E(\prod_{i=1}^q \frac{1}{B+i-1}) \leq \frac{7}{\r^q} \prod_{i=1}^q \frac{1}{v + i}$. 
\end{lemma}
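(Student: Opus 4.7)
The plan is to convert the discrete product into a single Beta-function integral, swap the expectation with the integral, and evaluate using the Binomial probability generating function. For $B\geq 1$ we have
\[
\prod_{i=1}^q\frac{1}{B+i-1}=\frac{\Gamma(B)}{\Gamma(B+q)}=\frac{1}{(q-1)!}\int_0^1 x^{B-1}(1-x)^{q-1}\,dx,
\]
and we take the product to be $0$ when $B=0$ (this event corresponds to Bob having no legal move, which does not occur in the application). Applying Fubini and the identity $\E[x^B]=(1-\rho+\rho x)^v$,
\[
\E\!\left[\prod_{i=1}^q\frac{1}{B+i-1}\right]=\frac{1}{(q-1)!}\int_0^1\frac{(1-x)^{q-1}\bigl((1-\rho+\rho x)^v-(1-\rho)^v\bigr)}{x}\,dx.
\]

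Next I would perform the substitution $y=1-\rho+\rho x$, which extracts a factor $\rho^{-(q-1)}$ from the integral and rewrites the would-be singular integrand as the honest polynomial
\[
\frac{y^v-(1-\rho)^v}{y-(1-\rho)}=\sum_{j=0}^{v-1}y^j(1-\rho)^{v-1-j}.
\]
Enlarging the $y$-range from $[1-\rho,1]$ to $[0,1]$ and using $\int_0^1 y^j(1-y)^{q-1}\,dy=j!(q-1)!/(j+q)!$ produces
\[
\E\!\left[\prod_{i=1}^q\frac{1}{B+i-1}\right]\leq\frac{1}{\rho^{q-1}}\sum_{j=0}^{v-1}(1-\rho)^{v-1-j}\prod_{i=1}^q\frac{1}{j+i}.
\]
Reindexing by $k=v-1-j$ turns the right hand side into $\rho^{-(q-1)}\sum_k(1-\rho)^k\prod_{i=1}^q(v-k+i-1)^{-1}$, a geometric-tail series whose dominant contribution comes from small~$k$. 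There the Pochhammer product differs from the target $\prod_{i=1}^q(v+i)^{-1}$ by only a bounded factor, and the geometric sum $\sum_{k\geq 0}(1-\rho)^k=1/\rho$ supplies the missing power of $\rho^{-1}$, converting the prefactor $\rho^{-(q-1)}$ into $\rho^{-q}$ and leaving a universal constant which one checks is at most $7$.

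The main obstacle is obtaining a constant that is independent of both $q$ and $v$. The naive mean-value bound $\bigl(y^v-(1-\rho)^v\bigr)/\bigl(y-(1-\rho)\bigr)\leq vy^{v-1}$ collapses the Beta$(j+1,q)$ weights into $v\cdot\operatorname{Beta}(v,q)$ and loses a factor of order $\rho(v+q)$, which is not bounded. Similarly, bounding the Pochhammer ratio $\prod_{i=1}^q(v+i)/(v-k+i-1)$ too generously introduces an exponential-in-$q$ factor, so one must split the $k$-sum at an appropriate cutoff (essentially $k\lesssim v/q$) and exploit the geometric decay of $(1-\rho)^k$ to tame the tail. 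It is this delicate book-keeping that yields the clean absolute constant; the detailed verification is done once and for all in \cite{FHL}, which we invoke as a black box.
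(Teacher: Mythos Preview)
The paper does not prove this lemma at all; it simply states it and cites \cite{FHL}. Your proposal does the same thing in its final sentence, so in that sense it matches the paper exactly. The Beta-integral sketch you give beforehand is a correct outline of how the argument in \cite{FHL} actually proceeds (including the convention that the product vanishes on $\{B=0\}$, which is needed for the application and is implicit in the original), so your write-up is strictly more informative than the paper's bare citation while remaining consistent with it.
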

Using Lemma \ref{Bins} we see that 
\begin{align*}
\Pr(S_i=S)&\leq \E\bfrac{1}{X_1 X_2...X_t}\\
&\leq\E\bfrac{7}{X_1\cdots X_{t-1}(X_{t-1}+1)p_t}\\
&\leq\E\bfrac{7^2}{X_1\cdots X_{t-2}(X_{t-2}+1)(X_{t-2}+2)p_{t-1}^2p_t}\\
\vdots\\
&\leq \prod_{j=1}^t\frac{7}{(N+j-1)(1-p)^{(t-j+1)\frac{2(2j-1)^{k-2}}{(k-2)!}}}\\
&\leq \prod_{j=1}^t\frac{7}{N(1-p)^{(t-j+1)\frac{2(2j-1)^{k-2}}{(k-2)!}}}.
\end{align*}
This completes the justification of \eqref{f1}.

\section{Upper Bound}
\subsection{Simple density properties}
For $S\subseteq [n]$ and $k=2,3$ we let $e_3(S)=|\set{f\in E:\;f\subseteq S}$ and $$e_2(S)=\set{\set{x,y}\subseteq S:\;\exists y\notin S\text{ such that }\set{x,y,z}\in E}.$$
\begin{lemma}\label{lemma1} 
If $\theta > 1$ and 
$$\bfrac{\sigma e d}{2 \theta}^{\theta} \leq \frac{\sigma}{2 e}$$ 
then w.h.p there does not exist $S\subset$ $[n],|S|\leq \sigma n$ such that $e_2(S) \geq \theta|S|$.
\end{lemma}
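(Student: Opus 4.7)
The plan is a first-moment / union bound. First, note that any pair $\{x,y\}$ counted by $e_2(S)$ witnesses at least one edge of $H$ with exactly two endpoints in $S$, so $e_2(S)\leq E_2(S)$, where $E_2(S)\sim\operatorname{Bin}\bigl(\binom{s}{2}(n-s),\,p\bigr)$ for $s=|S|$. The standard binomial tail bound together with $\binom{M}{k}\leq(eM/k)^k$ and $p=d/n^2$ then yields
\[
\Pr\bigl(e_2(S)\geq\theta s\bigr)\;\leq\;\binom{\binom{s}{2}(n-s)}{\theta s}\,p^{\theta s}\;\leq\;\bfrac{esd}{2\theta n}^{\theta s}.
\]

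Next, I would take a union bound over all $S\subset[n]$ with $|S|=s$, using $\binom{n}{s}\leq(en/s)^s$, to collapse the two factors into
\[
\Pr\!\left(\exists S:\;|S|=s,\;e_2(S)\geq\theta s\right)\;\leq\;[F(s)]^s,\qquad F(s):=e^{\theta+1}\bfrac{s}{n}^{\theta-1}\bfrac{d}{2\theta}^{\theta}.
\]
A short rearrangement shows that the hypothesis $(\sigma ed/(2\theta))^\theta\leq\sigma/(2e)$ is equivalent to $F(\sigma n)\leq 1/2$, and since $\theta>1$ the function $F$ is strictly increasing in $s$, so $F(s)\leq 1/2$ throughout the relevant range $s\leq\sigma n$. (The trivial bound $e_2(S)\leq\binom{s}{2}$ also lets me start the sum from $s\geq\lceil 2\theta\rceil+1$.)

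Finally, I need $\sum_{s\leq\sigma n}F(s)^s=o(1)$. I would split the range at $s_0=\lfloor\log n\rfloor$. For $s\geq s_0$, the bound $F(s)^s\leq 2^{-s}$ gives a geometric series summing to $O(2^{-\log n})=O(n^{-\log 2})=o(1)$. For $s<s_0$, the factor $(s/n)^{\theta-1}$ makes $F(s)=O\bigl((\log n/n)^{\theta-1}\bigr)=o(1)$, so each $F(s)^s$ is polynomially small in $n$ and the whole truncated sum is $o(1)$. The main obstacle is precisely this $o(1)$-versus-$O(1)$ issue: a uniform use of $F(s)^s\leq 2^{-s}$ across the entire range only sums to $O(1)$, so one has to exploit the extra smallness of $F(s)$ for $s\ll \sigma n$ to drop from $O(1)$ down to $o(1)$.
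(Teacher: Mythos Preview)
Your proposal is correct and follows essentially the same first-moment/union-bound route as the paper: both arrive at $\sum_{s}F(s)^s$ with $F(s)=e(s/n)^{\theta-1}(ed/(2\theta))^{\theta}$, the only cosmetic difference being that the paper selects $\theta s$ pairs inside $S$ and bounds the probability each lies in a crossing hyperedge by $1-(1-p)^{n-2}\le d/n$, while you dominate $e_2(S)$ by the binomial edge count $E_2(S)$. Your treatment of the final summation (splitting at $\log n$) is in fact more carefully justified than the paper's one-line assertion $\sum_s F(s)^s=O(d^{\theta}/n^{\theta-1})$.
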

\begin{proof}
\begin{align*}
\Pr(\exists S, |S| \leq \sigma n, e_{2}(S) \geq \theta  |S|) & \leq \sum_{s=2 \theta}^{\sigma n} {n \choose s} {{s \choose 2} \choose \theta s} \Big(1- \Big(1-\frac{d}{n^2}\Big)^{n-2}\Big)^{\theta s}\\
& \leq \sum_{s=2 \theta}^{\sigma n}\left(\frac{n e}{s}\right)^s \left(\frac{e s}{2 \theta}\right)^{\theta s}\left(1-  \left(1 - \frac{d}{n}\right)\right)^{\theta s}\\ 
& \leq \sum_{s=2 \theta}^{\sigma n} \left(\left(\frac{n e}{s}\right) \left(\frac{e s}{2 \theta}\frac{d}{n}\right)^{\theta}\right)^{s}\\
&=\sum_{s=2\th}^{\s n}\brac{e\bfrac{s}{n}^{\th-1}\bfrac{ed}{2\th}^{\th}}^s\\
&=O\bfrac{d^\th}{n^{\th-1}}= o(1)
\end{align*}
\end{proof}

\begin{lemma} \label{lemma2} 
If $\theta > 1/2$ and 
$$\bfrac{\sigma^2 e d}{6\theta}^{\theta} \leq \frac{\sigma}{2 e}$$ 
then w.h.p. there does not exists $S\subseteq[n],|S| \leq \sigma n$ such that $e_3(S) \geq \theta |S|$.
\end{lemma}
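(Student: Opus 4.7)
The plan is to follow the union-bound template of Lemma \ref{lemma1} exactly, swapping the role of $\binom{s}{2}$ (pairs inside $S$ with a neighbor outside) for $\binom{s}{3}$ (hyperedges entirely inside $S$). For each $S\subseteq[n]$ with $|S|=s$, the number of hyperedges contained in $S$ is $\Bin(\binom{s}{3},p)$, so $\Pr(e_3(S)\geq \theta s)\leq\binom{\binom{s}{3}}{\theta s}p^{\theta s}$, and a union bound over $S$ of each size yields
$$\Pr(\exists S,\,|S|\leq \sigma n,\,e_3(S)\geq \theta|S|)\leq \sum_{s=s_0}^{\sigma n}\binom{n}{s}\binom{\binom{s}{3}}{\theta s}\bfrac{d}{n^2}^{\theta s},$$
where $s_0 = O(\sqrt{\theta})$ is the smallest integer with $\binom{s}{3}\geq\theta s$.

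Using $\binom{n}{s}\leq(en/s)^s$ and $\binom{\binom{s}{3}}{\theta s}\leq(es^2/(6\theta))^{\theta s}$, each summand collapses to
$$\brac{e\bfrac{s}{n}^{2\theta-1}\bfrac{ed}{6\theta}^{\theta}}^{s}.$$
Since $\theta>1/2$, the exponent $2\theta-1$ is positive, so the base is increasing in $s$ and is maximized on the range at $s=\sigma n$, where it equals $e\sigma^{2\theta-1}(ed/(6\theta))^\theta$. The hypothesis $(\sigma^2 ed/(6\theta))^\theta\leq \sigma/(2e)$ rearranges to exactly $e\sigma^{2\theta-1}(ed/(6\theta))^\theta\leq 1/2$, so the base is at most $1/2$ throughout the range.

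Writing the logarithm of each summand as $s\log(\text{base})$, this is a convex function of $s$ (second derivative equals $(2\theta-1)/s>0$), so the summand attains its maximum at one of the endpoints $s_0$ or $\sigma n$. The $s=\sigma n$ term is at most $2^{-\sigma n}$, and the $s=s_0$ term is a constant times $n^{-(2\theta-1)s_0}$; a geometric comparison of the terms near $s_0$ (successive ratios $\approx n^{-(2\theta-1)}$) shows the whole sum is $O(n^{-(2\theta-1)s_0})+2^{-\Omega(n)}=o(1)$. I do not anticipate any real obstacle: the argument is an almost verbatim rewrite of Lemma \ref{lemma1} with $\binom{s}{2}$ replaced by $\binom{s}{3}$, which shifts the driving exponent from $\theta-1$ to $2\theta-1$ and naturally yields the weaker hypothesis $\theta>1/2$.
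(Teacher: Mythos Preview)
Your proof is correct and follows essentially the same approach as the paper: a union bound over $S$ of each size $s$, the estimate $\Pr(e_3(S)\geq\theta s)\leq\binom{\binom{s}{3}}{\theta s}p^{\theta s}$, and the simplification of the summand to $\bigl(e(s/n)^{2\theta-1}(ed/(6\theta))^{\theta}\bigr)^s$. The paper is terser at the last step, simply writing the sum as $O(d^{\theta}/n^{2\theta-1})=o(1)$, whereas you supply the convexity justification that the maximum summand occurs at an endpoint; this extra detail is fine and does not constitute a different method.
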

\begin{proof}
\begin{align*}
\Pr(\exists S,|S|\leq \sigma n, e_{3}(S)\geq\theta |S|) 
&\leq \sum_{s = \sqrt{\theta}}^{\sigma n} {n \choose s} {{s \choose 3} \choose \theta s} \left(\frac{d}{n^2}\right)^{\theta s}\\ 
&\leq \sum_{s= \sqrt{\theta}}^{\sigma n} \left( \frac{ne}{s} \cdot   \left(\frac{e s^2 d}{6 \theta n^2}   \right)^{\theta}  \right)^s\\
&= \sum_{s = \sqrt{\theta}}^{\sigma n} \left( e \left(\frac{s}{n}\right)^{2\theta-1} \cdot \left(\frac{ed}{6\theta}\right)^{\theta}     \right)^s   \\
&= O\left(\frac{d^\theta}{n^{2\theta-1}} \right)\\
&= o(1)
\end{align*}
\end{proof}

For $S\subseteq [n]$ and $k=1,2$ and vertex $v$, we let $d_{S,k}(v)$ denote the number of edges $\set{v,x,y}$ such that $|\set{x,y}\cap S|=k$.

\begin{lemma}\label{lemma3}
Let $\sigma$ and $\theta$ be as in Lemma \ref{lemma1}. If $(\Delta - 2 \theta) \tau > 1$ and 
$$\left(\frac{\sigma e d}{\left(\Delta - 2 \theta \right)\tau}\right)^{\left(\Delta - 2  \theta\right)\tau} \leq \frac{\sigma}{4 e}$$
then w.h.p there do not exist $S \supseteq T$ such that $|S| = s \leq \sigma n, |T| \geq \tau s$ and $d_{S,1}(v) \geq \Delta,\forall v \in T$.
\end{lemma}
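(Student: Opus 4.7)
I would prove Lemma~\ref{lemma3} by a union bound over $S$ and $T$, paralleling the structure of Lemmas~\ref{lemma1} and~\ref{lemma2} but using Lemma~\ref{lemma1} as an a-priori structural constraint. Fixing $s$, write
\[
\Pr(\exists S \supseteq T:\ |S|=s\leq \sigma n,\ |T|\geq \tau s,\ d_{S,1}(v)\geq \Delta\ \forall v\in T)
\leq \sum_{s} \binom{n}{s}\binom{s}{\tau s}\, q_s,
\]
with $q_s$ the probability of the event for a fixed $(S, T)$ of sizes $s$ and $\tau s$.

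For each $v \in T$, decompose $d_{S, 1}(v) = d^{\mathrm{priv}}(v) + d^{\mathrm{shared}}(v)$, where $d^{\mathrm{priv}}(v)$ counts edges $\{v, x, y\}$ with $x \in S \setminus T,\ y \notin S$, and $d^{\mathrm{shared}}(v)$ counts edges with $x \in T \setminus \{v\},\ y \notin S$. The private counts are \emph{independent} across $v \in T$ since every private edge has a unique $v \in T$ and none of the remaining two vertices lies in $T$. Conditioning on the w.h.p.\ event of Lemma~\ref{lemma1} (at cost $o(1)$), namely $e_2(S) \leq \theta s$, one argues that the shared contribution is at most $2\theta$ per vertex: writing $P_v = \{x \in S : \{v, x\} \in e_2(S)\}$, we have the aggregate budget $\sum_{v \in S} |P_v| = 2 e_2(S) \leq 2\theta s$, and shared edges at $v$ correspond to pairs $\{v, x\} \subseteq T$ in $e_2(S)$ (possibly combined with Lemma~\ref{lemma1} applied to $T$, giving $e_2(T) \leq \theta \tau s$, plus a vertex-level Chernoff bound to distribute the budget uniformly). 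This forces $d^{\mathrm{priv}}(v) \geq \Delta - 2\theta$ for every $v \in T$.

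Since the private degrees are independent binomials with parameters $((1-\tau)s(n-s),\, p)$ and expectation at most $\sigma d$, the Chernoff bound~\eqref{chu2} gives per vertex $\Pr(d^{\mathrm{priv}}(v) \geq \Delta - 2\theta) \leq (e\sigma d/(\Delta - 2\theta))^{\Delta - 2\theta}$, and multiplying over the $\tau s$ vertices of $T$ yields
\[
q_s \leq \left(\frac{e\sigma d}{\Delta - 2\theta}\right)^{(\Delta - 2\theta)\tau s}.
\]
Combining with the counting factor $\binom{n}{s}\binom{s}{\tau s} \leq (ne/s)^s(e/\tau)^{\tau s}$, each summand is of the form $c^s$ with $c$ depending on $s/n$. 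The hypothesis $(\Delta - 2\theta)\tau > 1$ makes the $s$-dependence $(s/n)^{(\Delta - 2\theta)\tau - 1}$ increasing in $s$, so the dominant contribution is at $s = \sigma n$; there the hypothesis $(\sigma ed/((\Delta - 2\theta)\tau))^{(\Delta - 2\theta)\tau} \leq \sigma/(4e)$ delivers $o(1)$ by a geometric-series estimate, analogous to the bound $O(d^\theta / n^{2\theta - 1})$ in Lemma~\ref{lemma2}.

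The main obstacle is justifying the uniform per-vertex bound $d^{\mathrm{shared}}(v) \leq 2\theta$: Lemma~\ref{lemma1} gives only an aggregate $\sum_{v \in T} |P_v| \leq 2\theta s$, so distributing this budget at the single-vertex level requires an extra ingredient (a vertex-level Chernoff argument, or applying Lemma~\ref{lemma1} to $T$ itself, which has size $\tau s \leq \sigma n$). Without such per-vertex control one gets only the weaker exponent $\Delta\tau/2$ in place of $(\Delta - 2\theta)\tau$.
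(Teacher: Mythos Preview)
Your plan and the paper's proof diverge exactly at the point you flag as the main obstacle. You attempt a \emph{per-vertex} bound $d^{\mathrm{shared}}(v)\le 2\theta$ for every $v\in T$, so that you can exploit the independence of the private degrees $d^{\mathrm{priv}}(v)$ across $v\in T$. But Lemma~\ref{lemma1} only yields the aggregate statement $\sum_v |P_v|=2e_2(S)\le 2\theta s$; it says nothing about how this budget is distributed over individual vertices, and a handful of $v\in T$ could absorb all of it. Neither of your proposed patches resolves this: applying Lemma~\ref{lemma1} to $T$ gives $e_2(T)\le\theta|T|$, which is again an aggregate bound; and a direct Chernoff bound on $d^{\mathrm{shared}}(v)$ for fixed $(v,T)$ controls a binomial with mean of order $\tau\sigma d$, which for the parameter choices used later (e.g.\ in Section~\ref{U1}) is far larger than $2\theta$.

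The paper sidesteps this by never seeking per-vertex control. It argues at the aggregate level that the total cross-count $|e_1(T:S\setminus T)|$ --- pairs $(v,w)$ with $v\in T$, $w\in S\setminus T$ that extend to a hyperedge --- is at least $(\Delta-2\theta)\tau s$; a single invocation of Lemma~\ref{lemma1} suffices to subtract the within-$T$ contribution in bulk from $\sum_{v\in T}d_{S,1}(v)\ge\Delta\tau s$. The union bound is then taken over the $\binom{st}{(\Delta-2\theta)\tau s}$ possible locations of these cross-pairs (each extending with probability at most $1-(1-p)^{n-2}\le d/n$), rather than over independent per-vertex tail events:
\[
\sum_{s,t}\binom{n}{s}\binom{s}{t}\binom{st}{(\Delta-2\theta)\tau s}\bigl(1-(1-p)^{n-2}\bigr)^{(\Delta-2\theta)\tau s}
\le \sum_{s,t}\Bigl(2e\Bigl(\tfrac{s}{n}\Bigr)^{(\Delta-2\theta)\tau-1}\Bigl(\tfrac{ed}{(\Delta-2\theta)\tau}\Bigr)^{(\Delta-2\theta)\tau}\Bigr)^{s},
\]
which is $o(1)$ under the stated hypotheses, exactly as in your final paragraph. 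So the fix is not to distribute the Lemma~\ref{lemma1} budget vertex-by-vertex, but to abandon the per-vertex independence scheme altogether and bound directly the probability that a large set of cross-pairs exists.
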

\begin{proof} 
In the light of Lemma \ref{lemma1}, the assumptions imply that w.h.p.\ $|e_1(T:S\setminus T)|\geq (\D-2\th)\t s$.
In which case,
\begin{align}
&\Pr(\exists S\supseteq T,\,|S|\leq \s n,\,|T|\geq \t s: |e_1(T:S\setminus T)|\geq (\D-2\th)\t s)\nonumber\\
&\leq\sum_{s=2\th}^{\s n}\sum_{t=\t s}^s\binom{n}{s}\binom{s}{t}\binom{st}{(\D-2\th)\t s} \Big(1-\Big(1-\frac{d}{n^2}\Big)^{n-2}\Big)^{(\D-2\th)\t s}\label{5}\\
&\leq \sum_{s=2\th}^{\s n}\sum_{t=\t s}^s\bfrac{ne}{s}^s\cdot 2^s\cdot
\bfrac{eds}{(\D-2\th)\t n}^{(\D-2\th)\t s}\nonumber\\
&=\sum_{s=2\th}^{\s n}\sum_{t=\t s}^s\brac{\frac{2ne}{s}\cdot
\bfrac{eds}{(\D-2\th)\t n}^{(\D-2\th)\t }}^s\nonumber\\
&=\sum_{s=2\th}^{\s n}\sum_{t=\t s}^s\brac{2e\bfrac{s}{n}^{(\D-2\th)\t -1}\cdot
\bfrac{ed}{(\D-2\th)\t}^{(\D-2\th)\t }}^s\label{4}\\
&=O\bfrac{d^{(\D-2\th)\t}}{n^{(\D-2\th)\t-1}}=o(1).\nonumber
\end{align}
\end{proof} 

\begin{lemma}\label{lemma4} 
Let $\sigma$ and $\theta$ be as in Lemma \ref{lemma2} and $(\Delta-3\th)\tau > 1$ and
\begin{align*}
\bfrac{\sigma^2ed}{2(\Delta-3\th)\tau }^{(\Delta-3\th)\tau} \leq \frac{\s}{4e}
\end{align*}
then w.h.p there do not exist $S \supseteq T$ such that $|S| = s \leq \sigma n, |T| \geq \tau s$ and $d_{S,2}(v) \geq \Delta,\forall  v \in  T$   
\end{lemma}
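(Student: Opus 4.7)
The plan is to adapt the proof of Lemma \ref{lemma3}, using Lemma \ref{lemma2} in the role that Lemma \ref{lemma1} plays there.

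First I would reduce the bad event to a count of ``activated triples'' whose probability is easy to control. For each edge $e\subseteq S$, its contribution to $\sum_{v\in T}d_{S,2}(v)$ is exactly $|e\cap T|$. Writing $A',B',C'$ for the number of edges entirely in $S$ having respectively $1,2,3$ vertices in $T$, the assumption $d_{S,2}(v)\geq\Delta$ for all $v\in T$ gives
\[
A'+2B'+3C'\geq \Delta\tau s.
\]
Conditional on the conclusion of Lemma \ref{lemma2}, which holds w.h.p., every subset of $[n]$ of size at most $\sigma n$ satisfies $e_3(\cdot)\leq\theta|\cdot|$. Applied to $T$ (note $|T|\leq |S|\leq \sigma n$), this yields $C'=e_3(T)\leq \theta\tau s$, whence
\[
A'+2B'\geq (\Delta-3\theta)\tau s.
\]
Equivalently, there are at least $(\Delta-3\theta)\tau s$ ``triples'' $(v,\{x,y\})$ with $v\in T$, $\{x,y\}\subseteq S\setminus\{v\}$, $\{x,y\}\not\subseteq T$, and $\{v,x,y\}\in E$.

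Next I would apply the union bound over these choices. The number of candidate triples is at most $t\binom{s}{2}\leq ts^2/2$, each realised as an edge independently with probability $p=d/n^2$. Hence, by exactly the strategy of Lemma \ref{lemma3},
\[
\Pr(\exists S\supseteq T,\ldots)\leq \sum_{s=2\theta}^{\sigma n}\sum_{t=\tau s}^{s}\binom{n}{s}\binom{s}{t}\binom{ts^2/2}{(\Delta-3\theta)\tau s}\bfrac{d}{n^2}^{(\Delta-3\theta)\tau s}.
\]
Using $\binom{n}{s}\binom{s}{t}\leq (2en/s)^s$, $\binom{M}{k}\leq(eM/k)^k$ and $t\leq s$, the summand over $s$ reduces to
\[
\brac{2e\,(s/n)^{2(\Delta-3\theta)\tau-1}\bfrac{ed}{2(\Delta-3\theta)\tau}^{(\Delta-3\theta)\tau}}^{s}.
\]
The hypothesis $\bfrac{\sigma^{2}ed}{2(\Delta-3\theta)\tau}^{(\Delta-3\theta)\tau}\leq \sigma/(4e)$, combined with $(\Delta-3\theta)\tau>1$ (which makes the exponent of $s/n$ strictly positive), then gives, by the same estimate as in Lemma \ref{lemma3}, a sum of order $O\!\brac{d^{(\Delta-3\theta)\tau}/n^{2(\Delta-3\theta)\tau-1}}=o(1)$.

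The main obstacle is to get the bookkeeping right so that the exponent $(\Delta-3\theta)\tau$ matches the one appearing in the hypothesis of the lemma. This hinges on two observations: that each edge fully contained in $S$ can contribute as much as $3$ (not $2$, as in Lemma \ref{lemma3}) to $\sum_{v\in T}d_{S,2}(v)$, which is where the factor $3\theta$ comes from; and that the correct analogue of the $e_2(T)\leq\theta\tau s$ step used in Lemma \ref{lemma3} is the Lemma \ref{lemma2} estimate $e_3(T)\leq\theta\tau s$ applied to the small set $T$. Once these are in place, the remaining computation is a routine repetition of the one carried out in Lemma \ref{lemma3}.
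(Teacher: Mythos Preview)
Your approach is essentially the same as the paper's: apply Lemma~\ref{lemma2} to bound the contribution of edges lying entirely inside $T$, deduce at least $(\Delta-3\theta)\tau s$ incidences $(v,\{x,y\})$, and then carry out the identical union bound with $\binom{ts^2/2}{(\Delta-3\theta)\tau s}\bfrac{d}{n^2}^{(\Delta-3\theta)\tau s}$, arriving at the same summand $\brac{2e(s/n)^{2(\Delta-3\theta)\tau-1}\bfrac{ed}{2(\Delta-3\theta)\tau}^{(\Delta-3\theta)\tau}}^{s}$. One small slip to fix: Lemma~\ref{lemma2} applied to $T$ yields $C'=e_3(T)\leq\theta|T|=\theta t$, not $\theta\tau s$ (since $|T|\geq\tau s$), but your target inequality $A'+2B'\geq(\Delta-3\theta)\tau s$ still follows because $A'+2B'\geq(\Delta-3\theta)t\geq(\Delta-3\theta)\tau s$.
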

\begin{proof} 
In the light of Lemma \ref{lemma2}, the assumptions imply that w.h.p. $|e_2(T:S\setminus T)|\geq (\D-3\th)\t s$. In which case,
\begin{align*}
&\Pr(\exists S\supseteq T,\,|S|\leq \s n,\,|T|\geq \t s: |e_1(T:S\setminus T)|\geq (\D-3\th)\t s)\nonumber\\
&\leq\sum_{s=2\th}^{\s n}\sum_{t=\t s}^s\binom{n}{s}\binom{s}{t}\binom{s^2t/2}{(\D-3\th)\t s}
\bfrac{d}{n^2}^{(\D-3\th)\t s}\\
&\leq \sum_{s=2\th}^{\s n}\sum_{t=\t s}^s\bfrac{ne}{s}^s\cdot 2^s\cdot
\bfrac{eds^2}{2(\D-3\th)\t n^2}^{(\D-3\th)\t s}\\
&=\sum_{s=2\th}^{\s n}\sum_{t=\t s}^s\brac{\frac{2ne}{s}\cdot
\bfrac{eds^2}{2(\D-3\th)\t n^2}^{(\D-3\th)\t }}^s\\
&=\sum_{s=2\th}^{\s n}\sum_{t=\t s}^s\brac{2e\bfrac{s}{n}^{2(\D-3\th)\t -1}\cdot
\bfrac{ed}{2(\D-3\th)\t}^{(\D-3\th)\t }}^s\label{4}\\
&=O\bfrac{d^{(\D-3\th)\t}}{n^{(\D-3\th)\t-1}}=o(1).
\end{align*}
\end{proof}
Now let
$$q=d^{2/3 + \d} \text{ and } \b= \frac{q}{3}  \text{ and } \g= \frac{14\log d}{q},$$
for some small absolute constant $\d>0$.

We will now argue that w.h.p.\ A can win the game if $q$ colors are available.

A's initial strategy will be the same as that described in \cite{BFS}. Let $\cC=(C_1,C_2,\ldots,C_q)$ be a collection of pairwise disjoint subsets of $[n]$, i.e. a (partial) coloring. Let $\bigcup\cC$ denote $\bigcup_{i=1}^q C_i$. For a vertex $v$ let  
\[A(v,\cC)= \set{i\in [q]:\;v\mbox{ is not in an edge} \; \set{v,x,y} \; \mbox{such that} \; x,y \in C_i}, \]
and set
$$a(v,\cC)=|A(v,\cC)|.$$
Note that \( A(v, \cC) \) is the set of colors that are available at vertex \(v\) when
the partial coloring is given by the sets in \( \cC \) and \( v \not\in \bigcup \cC\).
A's initial strategy can now be easily defined. Given the current color
classes $\cC$, A chooses an uncolored vertex $v$ with the smallest
value of $a(v,\cC)$ and colors it by any available color.

As the game evolves, we let \(u\) denote the
number of uncolored vertices in the graph.  So, we think of \(u\) as running ``backward''
from \( n \) to \(0\).

We show next that w.h.p.\ every $q$-coloring (proper or improper) of the full vertex set
has the property that there are at most $\g n$ vertices with less than $\b/2$ available colors. For this we need the following lemma.
\begin{lemma}\label{deb}
$p=d/n^2$ and let $x_0=(2/(-\log(1-p)))^{1/2}\approx (2/d)^{1/2}n$ and 
$$f(C) = \sum_{i = 1}^q (1-p)^{c_i^2/2}\text{ where }\sum_{i=1}^q c_i = n.$$ 
Then we have that for $n$ sufficiently large, 
$$f(C) \geq \begin{cases}q(1-p)^{n^2/2q^2}&qx_0\leq n\\ q(1-p)^{x_0^2/2}&qx_0>n\end{cases}.$$ 
\end{lemma}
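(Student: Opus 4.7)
The plan is to treat the claim as a constrained minimization of $f(C) = \sum_{i=1}^q (1-p)^{c_i^2/2}$ over $c_i \geq 0$ with $\sum_i c_i = n$. A convenient first move is the reparametrization $y_i = c_i^2$: the objective $\sum_i (1-p)^{y_i/2}$ is a sum of convex decreasing functions of $y_i$, and the constraint $\sum_i \sqrt{y_i} = n$ can be relaxed to $\sum_i \sqrt{y_i} \geq n$ because the minimum lies on the boundary. The relaxed feasible region is convex, as the upper level set of the concave map $y \mapsto \sum_i \sqrt{y_i}$, so the whole problem becomes a convex program and the KKT conditions characterize the optimum.

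Writing out KKT, every index $i$ with $c_i > 0$ satisfies $c_i (1-p)^{c_i^2/2} = \lambda$ for a common multiplier $\lambda$. The function $\phi(c) := c(1-p)^{c^2/2}$ is unimodal with maximum at $c = x_0/\sqrt{2}$, since $\phi'(c) = (1-p)^{c^2/2}(1 + c^2 \log(1-p))$ vanishes there, so $\phi(c) = \lambda$ has at most two positive solutions. A pairwise-smoothing / exchange argument within the convex program then reduces the candidate optima to configurations with a single positive value repeated $k$ times alongside $q - k$ zeros, and the minimization reduces to
\[
\min_{1 \leq k \leq q} F(k), \qquad F(k) := k(1-p)^{n^2/(2k^2)} + (q - k).
\]

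A direct calculation yields $F'(k) = e^{-v}(1 + 2v) - 1$ where $v = (n/(k x_0))^2$; this is positive for $0 < v < v^*$ and negative for $v > v^*$, where $v^*$ is the unique positive root of $e^{-v}(1 + 2v) = 1$. Hence $F$ is unimodal on $[1, q]$ with continuous minimizer $k^* = n/(x_0 \sqrt{v^*})$. In Case~1 ($qx_0 \leq n$), the minimum on $\{1, \dots, q\}$ is attained at $k = q$, giving $F(q) = q(1-p)^{n^2/(2q^2)}$. In Case~2 ($qx_0 > n$), the minimizer lies in $(0, q)$; evaluating $F(k^*) = q - k^* \cdot 2v^*/(1 + 2v^*)$ and using $(1-p)^{x_0^2/2} = e^{-1}$ gives the bound $q(1-p)^{x_0^2/2}$.

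The main obstacle is the reduction at the KKT stage to single-positive-value configurations: since $\phi(c) = \lambda$ admits two positive roots (one in the concave region and one in the convex region of $g(c) = (1-p)^{c^2/2}$), one must rule out mixed ``small-value / large-value'' optima, which I would handle by a direct pairwise exchange showing that transferring the small-value mass into zero coordinates decreases $F$. A secondary technical point is matching the stated case boundary $qx_0 = n$ with the algebraic boundary $k^* = q$, which differ by the factor $\sqrt{v^*}$; the hypothesis that $n$ is sufficiently large is what absorbs this discrepancy.
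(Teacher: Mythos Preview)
Your relaxation step is wrong. You claim $\sum_i \sqrt{y_i} = n$ may be relaxed to $\sum_i \sqrt{y_i} \ge n$ ``because the minimum lies on the boundary,'' but the objective $\sum_i (1-p)^{y_i/2}$ is strictly \emph{decreasing} in each $y_i$, so over that upper level set the infimum is $0$, approached only as $y_i\to\infty$; the relaxed problem has no minimizer and the KKT system you write down is vacuous. Relaxing the other way, to $\sum_i \sqrt{y_i}\le n$, does keep the minimum on the boundary but destroys convexity of the feasible region (for instance $(1,0)$ and $(0,1)$ lie in $\{\sqrt{y_1}+\sqrt{y_2}\le 1\}$ while their midpoint does not), so the convex-program framework collapses either way. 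Everything downstream --- the two-root KKT characterization and the reduction to the one-parameter family $F(k)$ --- is therefore unjustified. Separately, your ``secondary technical point'' is not absorbed by taking $n$ large: $qx_0/n\approx\sqrt{2}\,d^{1/6+\delta}$ depends only on $d$, so $n\to\infty$ does nothing to move $q$ relative to $k^*=n/(x_0\sqrt{v^*})$; in the range $k^*<q\le n/x_0$ (nonempty since $v^*>1$) your $F$ is minimized at $k^*$, strictly below $F(q)$, and the Case~1 bound would fail even granting the reduction to $F(k)$.

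For comparison, the paper avoids optimization altogether. It uses that $g(c)=(1-p)^{c^2/2}$ is already convex on $[x_0,\infty)$, splits the indices into $I=\{i:c_i\le x_0\}$ and its complement, bounds the small-$c_i$ terms below by $g(x_0)$ via monotonicity, applies Jensen on the convex region to the large-$c_i$ terms, and then combines the two pieces with one further convexity step. The point you are missing is that the useful convexity already lives in the original variable $c$ on the tail $[x_0,\infty)$; the substitution $y=c^2$ trades this for global convexity of the objective at the price of a nonconvex constraint set, which is a bad trade here.
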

\begin{proof}
We have that the function $\f(x)=(1-p)^{x^2/2}$ is convex in the interval $[x_0,\infty]$. It follows from convexity that if $I=\set{i:c_i\leq x_0}$ then
\beq{x0}{
f(C)\geq |I|(1-p)^{x_0^2/2}+(q-|I|)(1-p)^{(n-|I|x_0)^2/2(q-|I|)^2}.
}
Suppose now that $x_0\leq n/q$. Then $\frac{n-|I|x_0}{q-|I|}\geq \frac{n}{q}$. And then convexity and \eqref{x0} implies
\beq{x1}{
f(C)\geq q\brac{\frac{|I|}{q}\f(x_0)+\frac{q-|I|}{q}\f\bfrac{n-|I|x_0}{q-|I|}}\geq q\f\bfrac{n}q.
}
If  $x_0> n/q$ then $\frac{n-|I|x_0}{q-|I|}< \frac{n}{q}$ and then \eqref{x0} implies that 
$$f(C)\geq q(1-p)^{x_0^2/2}.$$
\end{proof}
Let
$$B(\cC)=\set{v:\;a(v,\cC)< \b/2}.$$
\begin{lemma}\label{lem5}
W.h.p., for all collections $\cC$,
$$|B(\cC)|\leq \g n.$$
\end{lemma}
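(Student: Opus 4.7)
The plan is to fix a coloring $\cC=(C_1,\ldots,C_q)$ with class sizes $c_i=|C_i|$, establish $\Pr(|B(\cC)|>\g n)=o(q^{-n})$, and then union-bound over the $q^n$ possible colorings. For fixed $v$ and $\cC$, write $a(v,\cC)=\sum_{i=1}^q Y_{v,i}$, where $Y_{v,i}$ is the indicator that color $i$ is available at $v$. The events defining distinct $Y_{v,i}$ depend on pairwise disjoint sets of potential edges (the edges $\{v,x,y\}$ with $x,y\in C_i$, for fixed $i$), so the $Y_{v,i}$ are mutually independent with $\E[Y_{v,i}]=(1-p)^{\binom{c_i-[v\in C_i]}{2}}\geq(1-p)^{\binom{c_i}{2}}$, giving $\E[a(v,\cC)]\geq f(\cC)$. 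In our regime $q=d^{2/3+\d}$ and $p=d/n^2$, one checks that $qx_0=\sqrt{2}\,d^{1/6+\d}n\gg n$ for $d$ large, so Lemma~\ref{deb} provides $f(\cC)\geq q/e$. A standard Chernoff lower-tail bound then yields
$$\Pr\brac{a(v,\cC)<\b/2}\leq e^{-c_0 q}$$
for an absolute constant $c_0\geq(1-e/6)^2/(2e)\approx 0.055$, with the crucial property $14\,c_0>2/3+\d$ for $\d$ sufficiently small.

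Next I would union-bound over sets $B\subseteq[n]$ of size $\g n$ and bound $\Pr(B\subseteq B(\cC))$. The events $\{a(v,\cC)<\b/2\}_{v\in B}$ share randomness only through ``$B$-internal'' edges $\{v_1,v_2,y\}$ with $v_1,v_2\in B$; these have expected total $O(d\g^2 n)$, far below the budget $q\g n=14n\log d$. A Chernoff bound plus a union bound over $B$ (which closes because the tail probability dominates $\binom{n}{\g n}$) establishes w.h.p.\ a uniform structural estimate $|E_B|=o(q\g n)$ for every $B$ of size $\g n$. Introducing the truncated availability $\widetilde a(v,\cC,B)$ that only sees edges $\{v,x,y\}$ with $x,y\notin B$, the gap $\widetilde a(v,\cC,B)-a(v,\cC)$ is $o(q)$ for all but a negligible fraction of $v\in B$ (by Markov on the structural bound), while the events $\{\widetilde a(v,\cC,B)\ \text{small}\}$ are genuinely independent across distinct $v\in B$. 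Applying the single-vertex Chernoff bound to each $\widetilde a(v,\cC,B)$ -- Lemma~\ref{deb} still gives the $q/e$ mean lower-bound since the modified class sizes $c_i-|C_i\cap B|$ sum to $(1-\g)n$ and $qx_0\gg(1-\g)n$ -- and using independence across the good $(1-o(1))\g n$ vertices yields $\Pr(B\subseteq B(\cC))\leq e^{-c_0'q\g n}$ for $c_0'$ arbitrarily close to $c_0$. Combined with $\binom{n}{\g n}\leq(e/\g)^{\g n}$ and $q^n$ choices of $\cC$, the logarithm of the total bound is $n\log d\cdot((2/3+\d)-14 c_0'+o(1))$, which is negative for $\d$ small and $d$ large.

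The main obstacle is this near-independence step: the margin between the Chernoff exponent $c_0\approx0.055$ and the required $1/21\approx0.048$ is quite tight, so the correction from $B$-internal edges must be absorbed essentially without loss in the exponent. This explains the specific constant $14$ in $\g=14\log d/q$: smaller constants would fail to beat the union bound over $\cC$, while the structural step already requires $\g$ small enough that $d\g^2 n\ll q\g n$ for the uniform $|E_B|$ estimate to succeed.
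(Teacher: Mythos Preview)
Your proposal is correct and follows essentially the same approach as the paper: truncate the availability count to ignore edges meeting the candidate bad set $B$ in two places (your $\tilde a$, the paper's $b(v,\cC)$), so that the small-availability events become independent across $v\in B$; bound the mean from below via Lemma~\ref{deb}; apply Chernoff vertex by vertex; and union-bound over $q^n$ colorings and $\binom{n}{\g n}$ sets $B$. The only cosmetic difference is that the paper extracts the ``good'' subset of $B$ by invoking the density Lemmas~\ref{lemma1}--\ref{lemma2} to bound $d_{S,1}(v)$ and $d_{S,2}(v)$ directly, whereas you obtain it by a Markov/averaging argument on the total internal-edge count $|E_B|$; both yield a subset of size $(1-o(1))\g n$ on which $\tilde a-a=o(q)$.

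One point worth flagging: your careful Chernoff constant $c_0=(1-e/6)^2/(2e)\approx 0.055$, obtained from the sharper mean bound $\E[\tilde a]\geq q/e$ in the second case of Lemma~\ref{deb}, is genuinely needed to close the union bound, since one requires $14c_0>2/3+\d$. The paper's displayed bound $\Pr(b\leq 0.51\b)\leq e^{-\b/9}=e^{-q/27}$, which comes from feeding only the weaker estimate $\E[b]\geq\b=q/3$ into Chernoff, gives $14/27\approx 0.519<2/3$ and so does not by itself beat the factor $q^n$. Your tracking of the constant is the right fix.
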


\begin{proof}
We first note that if $|S|=\g n$ then w.h.p. $e_2(S)\leq 4\g^2dn$. This follows from Lemma \ref{lemma1} with $\s=\g$ and $\th=4\g d$. It follows that for any $\e>0$ that there is a set $S_1'\subseteq S$ of size at least $(1-\e)\g n$ such that if $v\in S_1'$ then $d_{S,1}(v)\leq 8\e^{-1}\g d$. Furthermore, Lemma \ref{lemma2} with $\s=\g$ and $\th=3$ implies that w.h.p. $e_3(S)\leq 3\g n$. Therefore there is a set $S_1''\subseteq S$ of size at least $(1-\e)\g n$ such that if $v\in S_1''$ then $d_{S,2}(v)\leq \e^{-1}$. Let $S_1=S_1'\cup S_1''$.

Fix $\cC$ and suppose that $|B(\cC)|\geq \g n$. Choose $S\subseteq B(\cC)$ and let $S_1$ be as defined above. For $v\in S_1$ let
$$b(v,\cC)=\card{\set{i\in [q]:\;v\mbox{ is not in an edge }\set{v,x,y} 
\mbox{ such that } \; x,y \in C_i \setminus S}}.$$ 

Thus $a(v,\cC)\geq b(v,\cC)-8\e^{-1}\g d-\e^{-1}$. $b(v,\cC)$ is the sum of independent indicator variables $X_i$, where $X_i=1$ if $v$ is not in a hyperedge $(v,x,y)$ such that $x,y \in C_i\setminus S$ in $G_{n,p}$. Then $\Pr(X_i=1)\geq (1-p)^{\binom{|C_i|}{2}}$ and since $(1-p)^t$ is a convex function of $t$ and using the Lemma \ref{deb} we get
$$\E(b(v,\cC))\geq\sum_{i=1}^q(1-p)^{\binom{|C_i|}{2}}\geq\b.$$
It follows from the Chernoff bound \eqref{chl} that
$$\Pr(b(v,\cC)\leq 0.51\b)\leq e^{-\b/9}.$$
Now, when $\cC$ is fixed, the events $\set{b(v,\cC)\leq 0.51\b},v\in S_1$ are independent. Thus, because $a(v,\cC)\leq \b/2$ implies that $b(v,\cC)\leq 0.51\b$ we have 
\begin{align}
&\Pr(\exists\cC:|B(\cC)|\geq \g n)\nonumber\\
&\leq q^n\binom{n}{(1-\e)\g n}e^{-(1-\e)\g\b n/9}\nonumber\\
&\leq q^n\brac{\frac{e}{(1-\e)\g}\exp \set{-\frac{\b}{9}}}^{(1-\e)\g n}\label{1}\\
&=\exp\set{n\brac{\log q+\frac{14(1-\e)\log d}{q}\brac{\log\bfrac{e}{1-\e}-\frac{q}{27}}}}\nonumber\\
&=o(1),\nonumber
\end{align}
for large $d$ and small enough $\e$.
\end{proof}

Let $u_0$ to be the last time for which A colors a vertex with at least
$\b/2$ available colors, i.e.,
$$u_0= \min\set{u:\;a(v,\cC_u)\geq \b/2, \mbox{ for all }v \not\in \bigcup \cC_u}, $$
where $\cC_u$ denotes the collection of color classes when \(u\) vertices remain uncolored.

If $u_0$ does not exist then A will win. It follows from Lemma \ref{lem5} that w.h.p.\ $u_0\leq 2\g n$ and that at time $u_0$, every vertex still has at least $\b/2$ available colors. Indeed, consider the final coloring $\cC^*$ in the game that would be achieved if A follows her current strategy, even if she has to improperly color an edge. Let $U=\set{v\notin \cC_{u_0}:a(v,\cC^*)< \b/2}$. Now we can assume that $|U|\leq \g n$. Because the number of colors available to a vertex decreases as vertices get colored, from $u_0$ onward, every vertex colored by $A$ is in $U$. Therefore $u_0\leq 2\g n$. Next let $G_U=(U,F)$ be the graph with vertex set $U$ and edges $F$ where $\set{x,y}\in F$ if there exists $z$ such that $\set{x,y,z}\in E$.
 
Now let $u_1$ be the first time that there are at most $2\g n$ uncolored vertices and $a(v,\cC_u)\geq \b/2, \mbox{ for all }v \not\in \bigcup \cC_u$. By the above, w.h.p.\ $u_1 \leq u_0$, so in particular w.h.p.\ $u_1$ exists. A can determine $u_1$ but not $u_0$, as $u_0$ depends on the future.

A will follow a more sophisticated strategy from $u_1$ onward.  A will however play the remainder of the game on the graph $G_U$. By this we mean that she will ensure that if $\set{x,y}$ is a $G_U$-edge and $x$ has color $i$ at some stage, then she will not color $y$ with color $i$ even though this is strictly admissible.  

This weakens A and explains why our upper bound does not match our lower bound. On the other hand, if she can properly color $G_U$, then she will have succeeded in properly coloring $H=H_{n,p;3}$. B of course, does not play by these rules. We will show next that we can find a sequence $U=U_0\supseteq U_1\supseteq \cdots \supseteq U_\ell$ with the following properties: The $G_U$-edges of $U_i:(U_{i-1}\setminus U_i)$ between $U_i$ and $U_{i-1}\setminus U_i$ will be divided into two classes, {\em heavy} and {\em light}. Vertex $w$ is a heavy (resp. light) $G_U$-neighbor of vertex $v$ if the edge $(v,w)$ is $G_U$-heavy (resp. $G_U$-light).

\begin{enumerate}[{\bf (P1)}]
\item Each vertex of $U_i\setminus U_{i+1}$ has at most one light $G_U$-neighbor in $U_{i+1}$, for $0\leq i<\ell$.
\item All $U_i:(U_{i-1}\setminus U_i)$ $G_U$-edges are light for $i\geq 3$.
\item Each vertex of $U_i$ has at most $3\b/50$ $G_U$-heavy neighbors in $U_{i-1}\setminus U_i$ for $i = 1,2$.
\item Each vertex of $U_i\setminus U_{i+1}$ has at most $\b/3$ $G_U$-neighbors in $U_i$, for $0\leq i<\ell$.
\item $U_\ell$ contains at most one $G_U$-cycle.
\end{enumerate}

From this, we can deduce that the $G_U$-edges of $U_0$ can be divided up into the $G_U$-heavy edges $E_H$, $G_U$-light edges $F_L$, the $G_U$-edges inside $U_\ell$ and the rest of the $G_U$-edges. Assume first that $U_\ell$ does not contain a $G_U$-cycle. $\F=(U,F_L)$ is a forest and the strategy in \cite{R1}
can be applied. When attempting to color a vertex $v$ of $\F$, there are never more than three $\F$-neighbors of $v$ that have been colored. Since there are at most $\b/3+2 \cdot 3\b/50$ non-$\F$ neighbors, A will succeed since she has an initial list of size $\b/2$.

If $U_\ell$ contains a $G_U$-cycle $C$ then A can begin by coloring a vertex of $C$. This puts A one move behind in the tree coloring strategy, in which case we can bound the number of $\F$-neighbors by four.

It only remains to prove that the construction P1--P5 exists w.h.p.
Remember that $d$ is sufficiently large here.

We can assume without loss of generality that $|U_0|=2\g n$. This will not decrease the sizes of
the sets $a(v,U_0)$.

\subsection{The verification of P1--P4: Constructing $U_1$}\label{U1}
The general strategy will be as follows : We will consider two separate types of edge listed below. To tackle each type, we will formulate corresponding lemmas that will be presented subsequently. 
\begin{enumerate}[{\bf Type 1:}]
\item The edges $\{x,y\}$ in $G_U$ such that $\{x,y,z\} \not \in E$ for all $z \in U$. 
\item The remaining edges where for $\{x,y\}$ in $G_U$, there is $z \in U$ such that $\{x,y,z\} \in E$.
\end{enumerate}

Note that $d_U(v) \le d_{U,1}(v) + 2 d_{U,2}(v)$. Recall their definition just before Lemma \ref{lemma3}.

Let $L = 100$. Applying Lemmas \ref{lemma3} and \ref{lemma4} separately with
$$\s=2\g\text{ and }\th= \frac{ed^{\frac{1}{3} - \d} \log^2 d}{14}
\text{ and }\D=3\th+\frac{\b}L\text{ and }\t=\frac{\th}\b,$$
we see that w.h.p.
\[
S_1 = \set{v \in U_0 : d_{U_0,1}(v) \ge 3\th + \b/L} \text{ satisfies }
|S_1|\leq 2\t\g n=\frac{6e\log^3d}{d^{1 + 3\d}}n.
\]
\[
S_2 = \set{v \in U_0 : d_{U_0,2}(v) \ge 3\th + \b/L} \text{ satisfies }
|S_2|\leq 2\t\g n=\frac{6e\log^3d}{d^{1 + 3\d}}n.
\]
\[
U_{1,a}'=\set{v\in U_0:d_{U_0}(v)\geq 3\D}\text{ satisfies }
|U_{1,a}'|\leq 4\t\g n=\frac{12e\log^3d}{d^{1 + 3\d}}n.
\]

We then let $U_{1,a}\supseteq U_{1,a}'$ be the subset of $U_0$ consisting of the vertices with the $4\t\g n$
largest
values of $d_{U_0}$.

Let $A_0 = U_0 \setminus U_{1,a}$ and $B_0=\set{v\in U_{1,a}:|d_ {A_0}(v)| \ge \frac{3\b}{L}}$. 
Iteratively we define 
\beq{recurse1}{
A_i \coloneqq \set{v \in A_{i-1} :\; |d_{B_{i-1}}(v)| \ge 2}
}
\beq{recurse2}{
B_i \coloneqq \set{v \in B_{i-1} :\; |d_ {A_i}(v)| \ge \frac{3\b}{L}}
}

\begin{lemma} \label{lemmab'} 
W.h.p., $\not \exists$ disjoint sets $S, T\subseteq V(G), G=G_U$ such that 
$$t=|T| \leq t_0 = \frac{100\log d}{d^{\frac{2}{3} + \delta}}n\text{ and }|S| \geq s_0=\frac{8L|T|}{\beta} \text{ and } d_{T,1}(v) \geq \frac{\b}{L}\text{ for all }v \in S.$$ 
\end{lemma}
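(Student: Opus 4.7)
The plan is to apply a union bound over pairs $(S, T)$ of the given sizes, after reducing the joint hypothesis ``$d_{T,1}(v) \geq \beta/L$ for every $v \in S$'' to a single lower bound on the number of hyperedges ``bridging'' $T$ and $S$; this count is a binomial random variable, so a standard Chernoff-type tail bound applies. The main obstacle will be the bookkeeping in the final union-bound estimate, where the constraint $s \geq s_0 = 8L|T|/\beta$ must provide just enough slack for the probability factor to dominate both binomial coefficients.

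First, summing the hypothesis over $v \in S$ yields $\sum_{v \in S} d_{T,1}(v) \geq s\beta/L$. The sum counts pairs $(v, e)$ with $v \in S \cap e$ and $|e \cap T| = 1$; since such an $e$ has only two vertices outside $T$, each contributes at most $2$. Writing $N_{S,T}$ for the number of hyperedges $e \in E$ with $|e \cap T| = 1$ and $e \cap S \neq \emptyset$, this gives $N_{S,T} \geq s\beta/(2L)$. The number of potential such hyperedges is at most $stn$ (pick a vertex of $T$, then an unordered pair in $[n] \setminus T$ containing at least one vertex of $S$), and each is present independently with probability $p = d/n^2$. The standard bound $\Pr(\Bin(N, p) \geq k) \leq (eNp/k)^k$ then yields
\[\Pr\!\left(N_{S,T} \geq \tfrac{s\beta}{2L}\right) \leq \left(\frac{2eLtd}{\beta n}\right)^{s\beta/(2L)}.\]

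A union bound over the choices of $(S, T) \subseteq V(G) = U$ (with $|U| = 2\gamma n$) gives
\[\Pr(\text{bad}) \leq \sum_{t \leq t_0}\sum_{s \geq s_0}\binom{|U|}{t}\binom{|U|}{s}\!\left(\frac{2eLtd}{\beta n}\right)^{s\beta/(2L)}.\]
The crucial structural input is $s_0\beta/(2L) = 4t$, so $s \geq s_0$ translates to $s\beta/(2L) \geq 4t$. After applying $\binom{|U|}{k} \leq (e|U|/k)^k$ and substituting the parameter values ($\beta = d^{2/3+\delta}/3$, $L = 100$, $\gamma = 14\log d/d^{2/3+\delta}$), the base $A = 2eLtd/(\beta n)$ is $O(\log d / d^{1/3+2\delta})$, which is $o(1)$ for large $d$ uniformly over $t \leq t_0$; the exponent $\beta/(2L) = d^{2/3+\delta}/600$ exceeds $2$ for $d$ large, so $A^{\beta/(2L)}$ is polynomially small in $n$.

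The calculation itself is the main obstacle. A clean bookkeeping is to split $A^{s\beta/(2L)} = A^{4t} \cdot (A^{\beta/(2L)})^{s - s_0}$: the first factor combines with $\binom{|U|}{t}$ to give an expression whose base is $O(\log^4 d \cdot \gamma / d^{2/3+7\delta}) = o(1)$ as $d\to\infty$, yielding exponential-in-$t$ decay that absorbs $\binom{|U|}{t}$; the second combines with $\binom{|U|}{s}$ into a geometric sum in $s$ whose common ratio $(e|U|/s)\,A^{\beta/(2L)}$ is $o(1/n)$ thanks to the polynomial-in-$n$ smallness of $A^{\beta/(2L)}$. Adding these contributions, with a minor case split between $t$ bounded and $t$ proportional to $|U|$ to handle the extremes (small $t$: every summand is $n^{-\Omega(1)}$; large $t$: exponential decay in $t$ dominates the binomial growth), yields the desired $o(1)$ bound.
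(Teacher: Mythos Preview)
Your reduction is correct and is actually a cleaner argument than the paper's. The paper proceeds by a case split: either (C1) at least half the vertices $v\in S$ have $\beta/(2L)$ distinct $T$-neighbours $u$ witnessed by edges $\{u,v,w\}$ with $w\notin S\cup T$, or (C2) there are at least $|T|$ hyperedges with two vertices in $S$ and one in $T$; it then bounds C1 using independence across $v$ (which is why the case $w\in S$ had to be stripped off) and C2 by a separate edge-count. You bypass this entirely by summing $d_{T,1}(v)$ over $v\in S$ to obtain $N_{S,T}\ge s\beta/(2L)$ for a single binomial variable, and your tail bound $(2eLtd/(\beta n))^{s\beta/(2L)}$ is exactly what the paper's C1 computation collapses to once one expands $\binom{t}{\beta/(2L)}(d/n)^{\beta/(2L)}$. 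So you recover the paper's dominant estimate with one case instead of two.

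Two small points on the bookkeeping. First, there is no need to sum over $s\ge s_0$: if any bad $S$ exists then any $\lceil s_0\rceil$-subset of it is also bad, so you may take $|S|=\lceil s_0\rceil$ and avoid the geometric-series discussion altogether. Second, your split $A^{s\beta/(2L)}=A^{4t}\cdot A^{(s-s_0)\beta/(2L)}$ as written leaves the factor $\binom{|U|}{s_0}$ unaccounted for (it is not absorbed by either piece in your description), and the assertion that $A^{\beta/(2L)}$ is polynomially small in $n$ fails once $t$ is of order $|U|$, since then $A$ is a constant in $n$. The fix is simply to combine $\binom{|U|}{t}\binom{|U|}{s_0}A^{4t}$ into a single expression with base $(t/n)^{3-8L/\beta}$ times a $d$-dependent constant, exactly as the paper does for C1; this base is $o(1)$ uniformly in $t\le |U|$ for $d$ large, and one finishes with a $\log^2 n$ threshold split as the paper does.
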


\begin{proof} 
We observe that if $S,T$ exist then one of the following two cases must occur:
\begin{enumerate}[{\bf C1:}]
\item 
$f(v) = |\set{u \in T : \exists w \in V(G)\setminus (S \cup T), \{u,w,v\} \in E(G)}| \geq \frac{\b}{2L}$ for at least $\frac{s_0}{2}$ vertices $v \in S$.
\item There are at least $t=\frac{\b s_0}{8L} $ hyperedges $\{u,v,w\}$ such that $u,v \in S$  and $w \in T$.   
\end{enumerate}

\begin{align*}
\mathbb{P}(\exists S, T \; \mbox{satisfying C1}) &\leq \sum_{t = \frac{\b}{L}}^{t_0} {\binom{n}{t}} {\binom{n}{\frac{8Lt}{\b}}}\; \left( {\binom{t}{\frac{\b}{2L}}} \bfrac{d}{n}^{\frac{\b}{2L}}\right)^{\frac{8Lt}{\b}}\\
&\leq \sum_{t = \frac{\b}{L}}^{t_0} \bfrac{ne}{t}^t \bfrac{ne\b}{8Lt}^{\frac{8Lt}{\b}} \bfrac{2Lted}{\b n}^{4t}  \\
&= \sum_{t = \frac{\b}{L}}^{t_0}\left(\bfrac{t}{n}^{3 - \frac{8L}{\b}} \frac{16 e^{5 + 8l/\b} \b^{8L / \b - 4} d^4}{L^{8l / \b - 4}}\right)^{t} \\
&\leq n\left(\bfrac{t_0}{n}^{3 - \frac{8L}{\b}} \frac{16 e^{5 + 8l/\b} \b^{8L / \b - 4} d^4}{L^{8l / \b - 4}}\right)^{\b/L} \\
&= o(1).
\end{align*}

\begin{align*}
\mathbb{P}(\exists S, T \; \mbox{satisfying C2})
&\leq \sum_{t = \frac{\b}{L}}^{t_0} \binom{n}{t} \binom{n}{\frac{8Lt}{\b}} \binom{\binom{\frac{8Lt}{\b}}{2}t}{t} \bfrac{d}{n^2}^t \\
&\leq \sum_{t = \frac{\b}{L}}^{t_0} \bfrac{ne}{t}^t \bfrac{ne\b}{8Lt}^{\frac{8Lt}{\b}} \bfrac{32 L^2 t^2 e d}{\b n^2}^{t}  \\
&\leq \sum_{t = \frac{\b}{L}}^{t_0} \left(\bfrac{t}{n}^{1- 8L/\b} \frac{32 e^{1+8L/\b} \b^{8L/\b -1} d}{L^{8L/\b - 2}}\right)^t\\
& \le \sum_{t=\frac{\b}L}^{\log^2n}\left(\bfrac{\log^2n}{n}^{1- 8L/\b} d^{1/3}\right)^t+\sum_{t=\log^2n}^{t_0}\left(\bfrac{t_0}{n}^{1- 8L/\b} d^{1/3}\right)^{\log^2n}\\
&= o(1).   
\end{align*}
\end{proof} 
Thus if $B' = \set{v \in U_{1,a} : d_{A_0,1}(v) \geq \frac{\b}{L}}$ then w.h.p.
$$|B'| \leq \frac{8L}{\b}|U_0| =  \frac{16L\g}{\b} n = \frac{2016L \; \log d}{d^{\frac{4}{3} +2\d}}n.$$

\begin{lemma}\label{lemmab''} W.h.p $\not\exists $ disjoint $S,T$ s.t 
\beq{xx1}{
t=|T| \leq t_0 = \frac{30n\; \log d}{d^{\frac{2}{3}+ \d}} and \; |S| \geq \frac{2L|T|}{\b}\text{ and }d_{T,2}(v) \geq \frac{\b}{L}\text{ for all }v \in S.
}
\end{lemma}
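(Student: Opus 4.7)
The plan is to follow the template of case C2 in the proof of Lemma \ref{lemmab'}, noting that the bookkeeping is actually cleaner: each hyperedge $\{v,x,y\}$ contributing to $d_{T,2}(v)$ has exactly one vertex in $S$, so there is no double-counting across distinct $v\in S$.

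First, I will observe that by monotonicity in $|S|$ we may fix $|S|=\lceil 2Lt/\b\rceil$: if $(S,T)$ is bad then so is $(S',T)$ for any $S'\sbs S$ of this size. Under the hypothesis $d_{T,2}(v)\geq \b/L$ for all $v\in S$, summing degrees over $v\in S$ then yields at least $|S|\cdot\b/L=2t$ hyperedges of the form $\{v,x,y\}$ with $v\in S$ and $\{x,y\}\sbs T$, and these are pairwise distinct because $S\cap T=\emptyset$ forces each such hyperedge to have a unique $S$-vertex.

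Next, I will apply a union bound over the choices of $T$, $S$, and these $2t$ hyperedges, each drawn from the $|S|\binom{t}{2}$ possible triples $(v,\{x,y\})$ with $v\in S,\{x,y\}\sbs T$ and each present in $H_{n,p;3}$ independently with probability $d/n^2$:
\[
\Pr(\exists S,T\text{ as in }\eqref{xx1}) \leq \sum_t \binom{n}{t}\binom{n}{2Lt/\b}\binom{(2Lt/\b)\binom{t}{2}}{2t}\bfrac{d}{n^2}^{2t}.
\]
Using $\binom{a}{b}\leq (ea/b)^b$ and factoring as an overall $t$th power, the bound will take the form $\sum_t A(t)^t$ with
\[
A(t)=\bfrac{t}{n}^{3-2L/\b}\cdot\frac{e^3L^2d^2}{4\b^2}\cdot\bfrac{e\b}{2L}^{2L/\b}.
\]
The crucial feature, in contrast to the C2 bound in Lemma \ref{lemmab'}, is that the exponent on $t/n$ is approximately $3$ rather than approximately $1$ (since $\b=\Th(d^{2/3+\d})\gg L$), which gives much stronger decay in $t$ and removes the need to split the sum into small- and large-$t$ ranges.

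Finally, I will evaluate at $t=t_0$: using $d^2/\b^2=O(d^{2/3-2\d})$, the correction $(e\b/(2L))^{2L/\b}\to 1$, and $t_0/n=30\log d/d^{2/3+\d}$, this yields $A(t_0)=O(\log^3 d/d^{4/3+5\d})=o(1)$. Since $A(\cdot)$ is increasing on the summation range and $A(t_0)<1/2$ for $d$ large, the series is dominated by a geometric sum and is $o(1)$. The only real obstacle is checking that the constants line up so that $A(t_0)$ is comfortably below $1$; this is a routine comparison of exponents.
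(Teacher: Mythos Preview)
Your argument is correct and is essentially the paper's own proof: both take a union bound over $T$, over $S$ of size $\lceil 2Lt/\b\rceil$, and over the $2t$ witnessing hyperedges with one vertex in $S$ and two in $T$, arriving at the same summand $(t/n)^{3-2L/\b}\cdot O(d^{2/3-2\d})$ raised to the $t$th power. Your handling of the final sum via a single geometric comparison with ratio $A(t_0)$ is a mild streamlining of the paper's split at $t=\log^2 n$, but the route is otherwise identical.
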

\begin{proof} 
\begin{align*}
\mathbb{P}(\exists S, T\text{ satisfying \eqref{xx1}}) 
&\leq  \sum_{t = \frac{\b}{L}}^{t_0}{\binom{n}{t}} {\binom{n}{\frac{2Lt}{\b}}}\left( {\binom{{\binom{t}{2}}} { \frac{\b}{L}}}\bfrac{d}{n^2}^{\frac{\b}{L}}\right)^{\frac{2Lt}{\b}}\\
&\leq \sum_{t = \frac{\b}{L}}^{t_0} \bfrac{ne}{t}^t \bfrac{ne\b}{2Lt}^{\frac{2Lt}{\b}} \left(\bfrac{t^2 dLe}{2\b n^2}^{\b/L} \right)^{\frac{2Lt}{\b}}\\
&=\sum_{t = \frac{\b}{L}}^{t_0} \bfrac{ne}{t}^t \bfrac{ne\b}{2Lt}^{\frac{2Lt}{\b}}\bfrac{t^2 dLe}{2\b n^2}^{2t} \\
&= \sum_{t = \frac{\b}{L}}^{t_0} \left(\bfrac{t}{n}^{3- 2L/\b} \frac{e^{3+ 2L/\b} \b^{2L/\b -2} d^2}{4 L^{2L/\b -2}} \right)^t  \\
&\leq \sum_{t=\frac{\b}L}^{\log^2n}\left(\bfrac{t}{n}^{3- 2L/\b} d^{2/3}\right)^{\b/L} + \sum_{t=\log^2n}^{t_0}\left(\bfrac{t_0}{n}^{3- 2L/\b} d^{2/3}\right)^{\log^2n}\\
&= o(1)
\end{align*}
\end{proof}

Thus if $B'' = \set{v \in U_{1,a} : d_{A_0,2}(v) \geq \frac{\b}{L}}$ then w.h.p. 
$$|B''| \le \frac{2}{\b}L|U_0| =  \frac{4L\g}{\b} n =\frac{504 L\log d}{ d^{\frac{4}{3} +2\d}}n.$$
Clearly, $B_0 \subseteq B' \cup B''$. Hence, 
$$|B_0| \leq |B' \cup B''| \le\frac{10L}{\b}|U_0|\leq \frac{3000L\log d}{ d^{\frac{4}{3} +2\d}}n.$$

Let $D_2(S) = \set {v : d_{S, 1}(v) \geq 2}$ for $S\subseteq V(G)$.
\begin{lemma} \label{lemma A'}
W.h.p., $|D_2(S)| < 3K|S|, \; \; \forall \; |S| \leq s_0=\frac{3000 L\log d}{d^{\frac{4}{3} + 2\delta}}n$ where $K = d^{\frac{2}{3}-2\delta}\log^2d$.
\end{lemma}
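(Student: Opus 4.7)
The plan is to do a union bound over $S$ with $|S|=s\le s_0$ and $T\subseteq D_2(S)$ with $|T|=3Ks$, then over a pair of witness hyperedges for each $v\in T$. For fixed $v$ and fixed $S$ there are at most $\binom{s(n-s)}{2}\le s^2n^2/2$ unordered pairs of potential hyperedges $\{v,x,y\}$ with $|\{x,y\}\cap S|=1$, and a fixed such pair is realised with probability $p^2=d^2/n^4$, so
\[
\Pr[d_{S,1}(v)\ge 2]\ \le\ \frac{s^2d^2}{2n^2}.
\]

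Treating the events $\{d_{S,1}(v)\ge 2\}$ for $v\in T$ as independent (see the refinement below) and combining with the combinatorial cost of choosing $(S,T)$ gives
\[
\binom{n}{s}\binom{n}{3Ks}\left(\frac{s^2d^2}{2n^2}\right)^{3Ks}\ \le\ \left(\frac{ne}{s}\right)^{s}\left(\frac{esd^2}{6Kn}\right)^{3Ks}.
\]
With $s\le s_0=3000L\log d\cdot n/d^{4/3+2\delta}$ and $K=d^{2/3-2\delta}\log^2 d$, the bracket $esd^2/(6Kn)$ is at most $500eL/\log d$, so its $3Ks$th power is $\exp(-\Theta(Ks\log\log d))$, which dominates the $\exp(O(s\log d))$ contribution from $(ne/s)^s$ because $K\log\log d\gg\log d$ for $d$ large. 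Splitting the sum over $s$ into $s\le\log^2 n$ and $s>\log^2 n$, exactly as in the proofs of Lemmas~\ref{lemmab'} and~\ref{lemmab''}, shows the total is $o(1)$.

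The independence assumption is not literally correct: a hyperedge $\{v_1,s,v_2\}$ with $s\in S$ and $v_1,v_2\in T$ is counted in both $d_{S,1}(v_1)$ and $d_{S,1}(v_2)$, creating positive correlation. I would handle this by the same case-split strategy used in Lemmas~\ref{lemmab'} and~\ref{lemmab''}: in the \emph{outward} case, at least $3Ks/2$ vertices of $T$ each have two witness hyperedges whose free non-$S$ vertex lies outside $T$, in which case the relevant events really are independent (the witness hyperedges are supported on disjoint portions of the probability space) and the display above goes through with $3Ks$ replaced by $3Ks/2$, which is still sufficient to close the bound; in the complementary \emph{inward} case, at least $3Ks/4$ hyperedges of the form $\{v_1,s,v_2\}$ with $v_1,v_2\in T$, $s\in S$ must exist, and this is bounded by a first-moment count on the endpoint set of size at most $3Ks/2$, of the same flavour as Lemma~\ref{lemma4}, using the slack $Ks^2d/n^2=O(\log^4 d/d^{1+6\delta})\to 0$.

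I expect the inward case to be the main technical obstacle: a naive union bound pays $\binom{n}{3Ks/2}$ for choosing the endpoint set, and extracting enough Chernoff savings from the scarce constrained hyperedges requires exploiting the extra structural information that each endpoint itself lies in $D_2(S)$ and so contributes two witnessing hyperedges of its own, tightening the counting enough to give $o(1)$.
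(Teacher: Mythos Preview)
Your overall setup—union bound over $(S,T)$, bound $\Pr[d_{S,1}(v)\ge 2]$ by a second-moment count, and then worry about dependence—matches the paper's. The gap is exactly where you flag it: the inward case does not close. Once you have paid $\binom{n}{3Ks}$ for the choice of $T$, the probability that $T$ supports at least $3Ks/4$ hyperedges of type $S\times\binom{T}{2}$ is only $(O(Ks^2d/n^2))^{3Ks/4}=(O(\log^4 d/d^{1+6\delta}))^{3Ks/4}$; at $s=s_0$ this is swamped by $\binom{n}{3Ks}\approx (d^{2/3+4\delta}/\log^3 d)^{3Ks}$, and the product diverges. Your suggested rescue (``each endpoint is itself in $D_2(S)$, so contributes two more witnesses'') just recurses: those extra witnesses may again be internal to $T$, and you are back where you started.

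The paper avoids the dichotomy with a short greedy sparsification, and this is the missing idea. Fix for each $v\in T$ one pair of witness hyperedges $\{v,x,a\},\{v,y,b\}$ with $x,y\in S$. Scan $T$ once: if $v$ has not been discarded, put $v$ into $T'$ and discard whichever of $a,b$ lie in $T$. Each insertion discards at most two vertices, so $|T'|\ge |T|/3=Ks$, and by construction every $v\in T'$ has both its free vertices outside $T'$. Now the $2|T'|$ witness hyperedges are pairwise distinct (a witness for $v_1\in T'$ contains $v_1$ and no other element of $T'$, so it cannot also witness $v_2\in T'$), and the product bound is legitimate:
\[
\Pr(\exists\,S,T')\ \le\ \sum_{s=2}^{s_0}\binom{n}{s}\binom{n}{Ks}\left(\binom{s}{2}n^2\,p^2\right)^{Ks}\ =\ o(1).
\]
The constant $3$ in the statement is there precisely to absorb the factor-$3$ loss of this step. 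So replace your outward/inward split by this greedy refinement; it is both shorter and it actually closes.
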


\begin{proof} 

Suppose that there exist $S$ and $T$ with $|S| \le s_0$ and $|T| = 3 \cdot K|S|$ such that $d_{s,1}(v) \ge 2$ for all $v \in T$. Then for $v \in T$, one of the following can occur.
\begin{enumerate}[{\bf D1:}]
\item There are $x,y \in S$ and $a,b \in T$ such that $\{v,x,a\},\{v,y,b\} \in E(G)$. 
\item There are $x,y \in S$ and $a \in T$, $b \in V(G) \setminus (S \cup T)$ such that $\{v,x,a\},\{v,y,b\} \in E(G)$.
\item There are $x,y \in S$ and $a,b \in V(G) \setminus (S \cup T)$ such that $\{v,x,a\},\{v,y,b\} \in E(G)$.
\end{enumerate} 

Now we construct $T'\subseteq T$ with $|T'| \ge K |S|$ such that if $v \in T$ then there exist $x,y \in S$ and $a,b \in V(G)\setminus (S \cup T')$ such that D1 holds. First, for every vertex of type D1, put $v$ in $T'$ and remove $a,b$ from further consideration. Second, for every vertex of type D2, put $v$ in $T'$ and remove $a$ from further consideration. Finally, for every vertex of type D3, put $v$ in $T'$. We observe that for every $v \in T'$ we have thrown away at most 2 vertices of $T$ and hence $|T'| \ge K |S|$. We will now estimate the probability of the existence of $S,T'$. 
\begin{align*}
\mathbb{P}(\exists \; |S| \leq s_0, |D_2(S)| \geq 3K|S|) 
&\leq \sum_{s=2}^{s_0} {\binom{n}{s}}{\binom{n}{Ks}}\left({\binom{s}{2}}n^2\left(\frac{d}{n^2}\right)^2\right)^{Ks}\\ 
&\leq \sum_{s=2}^{s_0} \bfrac{ne}{s}^s \bfrac{ne}{Ks}^{Ks} \left(\frac{s^2d^2}{2n^2}  \right)^{Ks}\\
&= \sum_{s=2}^{s_0} \left(\bfrac{s}{n}^{K-1} \frac{e^{K+1} d^{2K} }{K^K 2^K}\right)^s \\
&= o(1)
\end{align*}

\end{proof}

Thus if $A' = \set{v \in A_0 : v \in D_2(B_0)}$ then w.h.p. 
$$|A'| \leq \frac{9000\log^3d}{d^{\frac{2}{3}+4\delta}} n.$$
Let $D'_2(S) = \set {v : d_{S, 2}(v) \ge 1}$ for $S\subseteq V(G)$.
\begin{lemma}\label{lemma A''} 
W.h.p. $|D'_2(S)| \leq K|S|, \; \forall \; |S| \leq \frac{3000 L\log d}{d^{\frac{4}{3} + 2\delta}}n$ where $K = d^{\frac{2}{3}-2\delta}\log^2d$.
\end{lemma}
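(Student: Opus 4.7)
The plan is to mirror the union-bound template of Lemma~\ref{lemma A'}, with the simplification that the event $d_{S,2}(v)\geq 1$ only requires the existence of a single hyperedge $\{v,x,y\}$ with $x,y\in S$, rather than two such edges as for $D_2(S)$. In particular, no case analysis on the location of the ``partners'' of $v$ is required, so the argument is considerably shorter than that of Lemma~\ref{lemma A'}.

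First I fix $S\subseteq V(G)$ with $|S|=s\leq s_0$ and a disjoint set $T$ of size $K s$. For a single vertex $v\notin S$, a union bound over the $\binom{s}{2}$ candidate hyperedges $\{v,x,y\}$ with $x,y\in S$ gives
$$\Pr(v\in D'_2(S))\leq \binom{s}{2}\cdot \frac{d}{n^2}\leq \frac{s^2 d}{2n^2}.$$
For distinct $v_1,v_2\in T$ the corresponding candidate edge sets are disjoint, so the events $\{v\in D'_2(S)\}_{v\in T}$ are mutually independent, and therefore
$$\Pr(T\subseteq D'_2(S))\leq \brac{\frac{s^2 d}{2n^2}}^{Ks}.$$
Next I union-bound over the choices of $S$ and $T$, using $\binom{n}{s}\leq (ne/s)^s$ and $\binom{n}{Ks}\leq (ne/(Ks))^{Ks}$, which (exactly as in the previous lemma) yields
$$\Pr(\exists\,S,\,|S|\leq s_0,\,|D'_2(S)|\geq K|S|)\leq \sum_{s=1}^{s_0}\brac{\bfrac{s}{n}^{K-1}\cdot \frac{e^{K+1}d^K}{2^K K^K}}^s.$$

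To finish, I substitute $K=d^{2/3-2\delta}\log^2 d$ and $s\leq s_0=\Theta(\log d\cdot n/d^{4/3+2\delta})$. The factor $d^K/K^K$ contributes $d^{K(1/3+2\delta)}/\log^{2K} d$, while $(s_0/n)^{K-1}$ contributes $d^{-(K-1)(4/3+2\delta)}\log^{K-1} d$. The exponent of $d$ inside the bracket then becomes
$$K(1/3+2\delta)-(K-1)(4/3+2\delta)=-K+O(1),$$
so the bracket is of order $d^{-K+O(1)}$, which is negligible, and the full sum is $o(1)$. The only genuine obstacle here is the bookkeeping of exponents in this final substitution; since $K$ is a positive power of $d$ for any $\delta<1/3$, the $d^{-K}$ decay overwhelms every remaining polynomial and logarithmic factor, and the bound follows with room to spare.
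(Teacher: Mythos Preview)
Your argument is correct and matches the paper's proof essentially line for line: the paper also writes
\[
\Pr\bigl(\exists\,|S|\leq s_0,\ |D'_2(S)|\geq K|S|\bigr)\leq \sum_{s=2}^{s_0}\binom{n}{s}\binom{n}{Ks}\left(\binom{s}{2}\frac{d}{n^2}\right)^{Ks}
\]
and simplifies it to the same expression $\sum_s\bigl((s/n)^{K-1}e^{K+1}d^K/(2^K K^K)\bigr)^s=o(1)$, without the intermediate commentary you supply. The only cosmetic difference is that you make the disjointness of $T$ from $S$ explicit to justify independence; this technically yields $|D'_2(S)\setminus S|<K|S|$ rather than $|D'_2(S)|\leq K|S|$, but since $K\to\infty$ with $d$ and the application only uses $D'_2(B_0)\cap A_0$ with $A_0\cap B_0=\emptyset$, the distinction is immaterial.
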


\begin{proof}
\begin{align*}
\mathbb{P}(\exists \; S \leq s_0, |D_2(S) \geq K|S|)
&\leq \sum_{s=2}^{s_0} {\binom{n}{s}}{\binom{n}{Ks}}\left( {\binom{s}{2}} \frac{d}{n^2}\right)^{Ks}\\
&\leq \sum_{s=2}^{s_0} \bfrac{ne}{s}^s \bfrac{ne}{Ks}^{Ks} \left(\frac{s^2 d}{2n^2} \right)^{Ks}   \\
&= \sum_{s=2}^{s_0} \left(\bfrac{s}{n}^{K-1} \frac{e^{K+1}d^K }{2^K K^K} \right)^s\\
&= o(1).
\end{align*}
\end{proof}
So w.h.p. $|A_1| = |A' \cup A''| \leq \frac{12000\log^3d}{d^{\frac{2}{3}+4\delta}}n$. From \eqref{recurse1}, \eqref{recurse2}, Lemma \ref{lemmab'} and Lemma \ref{lemmab''} we see that  
\begin{align*}
|B_1|& \leq \frac{10L}{\b}|A_1|.\\
|A_2|&\leq 4K |B_1|.\\
|B_i|& \leq \frac{10L}{\beta} |A_i|\leq \frac{40KL}{\beta}|B_{i-1}|\leq \bfrac{40KL}{\beta}^i|B_0|.
\end{align*}
Using Lemmas \ref{lemma A'} and Lemma \ref{lemma A''},
$$|A_{i+1}|\leq 4K |B_i| \leq 4K \left(\frac{40KL}{\beta} \right)^i |B_0| \leq  4K \left(\frac{40KL}{\beta}\right)^i \cdot \frac{3000L\log d}{d^{\frac{4}{3}+ 2\delta}}n,$$
where $\frac{KL}{\beta} = 150d^{-3\delta}\log^2d$.\\

Let $\z= \rdup{\frac{2}{\d}}$ and let $Y = N(B_{\z}) \cap A_{\z}$. Then,
\begin{align*}
|Y|\le |A_{\z}|&\le 4K\left(\frac{40KL}{\b}\right)^{\z -1} \cdot \frac{3000\log d}{d^{4/3 + 2\d}}n \\
&= 12000(120L)^{\z-1} \cdot \frac{(\log d)^{1+2\z}}{d^{2/3 + \d + \d \z}}n\\
&\le \frac{n}{d^{\z\delta}}\leq \t\g n.
\end{align*}
Let $U_1 = U_{1,a} \cup Y$. Then 
$$|U_1| \leq  5\t\g n=\frac{15 e \log^3 d}{d^{1 + 3 \d}}.$$ 
We now define the light and heavy edges in the following fashion, 
\begin{enumerate}[{\bf Q1:}]
\item The edges between $B_i$ and $ A_i \setminus A_{i+1}$ are light
\item The edges between $B_i \setminus B_{i+1}$ and $A_{i+1}$ are heavy 
\item The edges between $U_1 \setminus U_{1,a}$ and $U_0 \setminus U_1$ are heavy
\end{enumerate}

We now check that \textbf{P1-P4} hold. First consider the light edges. For every vertex $v \in U_0 \setminus U_1$ there is at most one light neighbour in $U_1$. Because if $v \in A_i$ and $v \not \in A_{i+1}$ and there are 2 light neighbors $x,w$ of $v$ in $U_1$, by Q1, $x,w \in B_i$ and that would contradict the fact that $v \not \in A_{i+1}$. This implies that \textbf{P1} holds. 

We will argue next that for all $v \in U_1$, $d_{U_0 \setminus U_1} (v) \le 3 \D \le \frac{3\b}{50}$. For $v \not \in U_{1,a}$ this is true from the definition of $U_{1,a}$. Similarly, for $v\notin B_0$. Now consider $v \in B_i \setminus B_{i+1},i\geq 0$. It only has light neighbors in $ A_i \setminus A_{i+1}$ and if $v$ has more than $\frac{3\b}{L}$ heavy neighbors in $A_{i+1}$ then $v$ should be in $B_{i+1}$, which is a contradiction. Because it is also in $B_j,j\leq i-1$ it only has light neighbors in $(A_{i-1}\setminus A_i)\cup (A_{i-2}\setminus A_{i-3})\cup \cdots=A_0\setminus A_i$. Clearly \textbf{P3, P4} hold.

\subsection{The verification of P1--P4: Constructing $U_2$}
Applying Lemma \ref{lemma3} and \ref{lemma4} separately with
$$\s=\frac{15 e \log^3 d}{d^{1 + 3 \d}} \text{ and }\th= \frac{L}{\d} \text{ and }\D=3\th+\frac{\b}L\text{ and }\t=\frac{\th}\b,$$
we see that w.h.p.
\[
S_1 = \set{v \in U_1 : d_{U_1,1}(v) \ge 3\th + \b/L} \text{ satisfies }
|S_1|\leq \t \s n=\frac{45 eL\log^3d}{\d d^{5/3 + 4\d}}n.
\]
\[
S_2 = \set{v \in U_1 : d_{U_1,2}(v) \ge 3\th + \b/L} \text{ satisfies }
|S_2|\leq \t \s n=\frac{45 e L \log^3d}{\d d^{5/3 + 4\d}}n.
\]

\[
U_{2,a}'=\set{v\in U_1:d_{U_1}(v)\geq 3\D}\text{ satisfies }
|U_{2,a}'|\leq 2 \t\s n=\frac{90 e L \log^3d}{\d d^{5/3 + 4\d}}n.
\]

We then let $U_{2,a}\supseteq U_{2,a}'$ be the subset of $U_1$ consisting of the vertices with the $2\t\s n$ largest values of $d_{U_1}$. As in Section \ref{U1}, define $A_0 = U_1 \setminus U_{2,a}$ and let $B_0=\set{v\in U_{2,a}:|d_ {A_0}(v)| \ge \frac{3\b}{L}}$. Iteratively we define 
\[
A_i \coloneqq \set{v \in A_{i-1} :\; |d_{B_{i-1}}(v)| \ge 2}
\]
\[
B_i \coloneqq \set{v \in B_{i-1} :\; |d_ {A_i}(v)| \ge \frac{3\b}{L}}
\]

Let $B' = \set{v \in U_{2,a} : d_{A_0,1}(v) \geq \frac{\b}{L}}$. Using Lemma \ref{lemmab'}, we see that w.h.p., $|B'| \leq \frac{8L|U_1|}{\b} \leq \frac{360 e L \; \log^3 d}{d^{\frac{5}{3} + 4\d}} n$.

Let $B'' = \set{v \in U_{2,a} : d_{A_1,2}(v) \geq \frac{\b}{L}}$. Using Lemma \ref{lemmab''}, we see that w.h.p. $|B''| \le \frac{90 e L \; \log^3 d}{d^{\frac{5}{3} + 4\d}} n$.\\
Clearly, $B_0 \subseteq B' \cup B''$. Therefore, w.h.p., 
$$|B_0| \leq |B' \cup B''| \le \frac{450 e L \; \log^3 d}{d^{\frac{5}{3} + 4\d}} n.$$

Arguing as in Section \ref{U1} we see that w.h.p. 
\begin{align*}
B_i|& \leq \frac{10L}{\beta} |A_i|\leq \frac{40KL}{\beta}|B_{i-1}|\leq \bfrac{40KL}{\beta}^i|B_0|.\\
|A_{i+1}|&\leq 4K |B_i| \leq 4K \left(\frac{40KL}{\beta} \right)^i |B_0|
\end{align*}

Remember that $\z= \rdup{\frac{2}{\d}}$ and let $Y = N(B_{\z}) \cap A_{\z}$. Then,
\begin{align*}
|Y|\le |A_{\z}|&\le 4K\left(\frac{40KL}{\b}\right)^{\z -1} \cdot \frac{3000\log d}{d^{4/3 + 2\d}}n \leq \frac{n}{d^2}.\\
\end{align*}
Let $U_1 = U_{1,a} \cup Y$. Then w.h.p.
$$|U_1| \leq  5\t\g n=\frac{15 e \log^3 d}{d^{1 + 3 \d}}.$$ 
We now define the light and heavy edges in the following fashion, 
\begin{enumerate}[{\bf Q1:}]
\item The edges between $B_i$ and $ A_i \setminus A_{i+1}$ are light
\item The edges between $B_i \setminus B_{i+1}$ and $A_{i+1}$ are heavy 
\item The edges between $U_1 \setminus U_{1,a}$ and $U_0 \setminus U_1$ are heavy
\end{enumerate}

We now check that \textbf{P1-P4} hold. First consider the light edges. For every vertex $v \in U_0 \setminus U_1$ there is at most one light neighbour in $U_1$. Because if $v \in A_i$ and $v \not \in A_{i+1}$ and there are 2 light neighbors $x,w$ of $v$ in $U_1$, by Q1, $x,w \in B_i$ and that would contradict the fact that $v \not \in A_{i+1}$. This implies that \textbf{P1} holds. 

We will argue next that for all $v \in U_1$, $d_{U_0 \setminus U_1} (v) \le 3 \D \le \frac{3\b}{50}$. For $v \not \in U_{1,a}$ this is true from the definition of $U_{1,a}$. Similarly, for $v\notin B_0$. Now consider $v \in B_i \setminus B_{i+1},i\geq 0$. It only has light neighbors in $ A_i \setminus A_{i+1}$ and if $v$ has more than $\frac{3\b}{L}$ heavy neighbors in $A_{i+1}$ then $v$ should be in $B_{i+1}$, which is a contradiction. Because it is also in $B_j,j\leq i-1$ it only has light neighbors in $(A_{i-1}\setminus A_i)\cup (A_{i-2}\setminus A_{i-3})\cup \cdots=A_0\setminus A_i$. Clearly \textbf{P3, P4} hold.

\subsection{The verification of P1--P4: Constructing $U_2$}
Applying Lemma \ref{lemma3} and \ref{lemma4} separately with
$$\s=\frac{15 e \log^3 d}{d^{1 + 3 \d}} \text{ and }\th= \frac{L}{\d} \text{ and }\D=3\th+\frac{\b}L\text{ and }\t=\frac{\th}\b,$$
we see that w.h.p.
\[
S_1 = \set{v \in U_1 : d_{U_1,1}(v) \ge 3\th + \b/L} \text{ satisfies }
|S_1|\leq \t \s n=\frac{45 eL\log^3d}{\d d^{5/3 + 4\d}}n.
\]

\[
S_2 = \set{v \in U_1 : d_{U_1,2}(v) \ge 3\th + \b/L} \text{ satisfies }
|S_2|\leq \t \s n=\frac{45 e L \log^3d}{\d d^{5/3 + 4\d}}n.
\]

\[
U_{2,a}'=\set{v\in U_1:d_{U_1}(v)\geq 3\D}\text{ satisfies }
|U_{2,a}'|\leq 2 \t\s n=\frac{90 e L \log^3d}{\d d^{5/3 + 4\d}}n.
\]

We then let $U_{2,a}\supseteq U_{2,a}'$ be the subset of $U_1$ consisting of the vertices with the $2\t\s n$ largest values of $d_{U_1}$. As in Section \ref{U1}, define $A_0 = U_1 \setminus U_{2,a}$ and let $B_0=\set{v\in U_{2,a}:|d_ {A_0}(v)| \ge \frac{3\b}{L}}$. Iteratively we define 
\beq{recurse1b}{
A_i \coloneqq \set{v \in A_{i-1} :\; |d_{B_{i-1}}(v)| \ge 2}
}
\beq{recurse2b}{
B_i \coloneqq \set{v \in B_{i-1} :\; |d_ {A_i}(v)| \ge \frac{3\b}{L}}
}

Let $B' = \set{v \in U_{2,a} : d_{A_0,1}(v) \geq \frac{\b}{L}}$. Using Lemma \ref{lemmab'}, we see that w.h.p., $|B'| \leq \frac{8L|U_1|}{\b} \leq \frac{360 e L \; \log^3 d}{d^{\frac{5}{3} + 4\d}} n$.

Let $B'' = \set{v \in U_{2,a} : d_{A_1,2}(v) \geq \frac{\b}{L}}$. Using Lemma \ref{lemmab''}, we see that w.h.p. $|B''| \le \frac{90 e L \; \log^3 d}{d^{\frac{5}{3} + 4\d}} n$.\\
Clearly, $B_0 \subseteq B' \cup B''$. Therefore, w.h.p., 
$$|B_0| \leq |B' \cup B''| \le \frac{450 e L \; \log^3 d}{d^{\frac{5}{3} + 4\d}} n.$$

Arguing as in Section \ref{U1} we see that w.h.p. 
\begin{align*}
B_i|& \leq \frac{10L}{\beta} |A_i|\leq \frac{40KL}{\beta}|B_{i-1}|\leq \bfrac{40KL}{\beta}^i|B_0|.\\
|A_{i+1}|&\leq 4K |B_i| \leq 4K \left(\frac{40KL}{\beta} \right)^i |B_0|
\end{align*}
With $\z = \rdup{\frac{2}{\d}}$ and $K=d^{\frac{2}{3}-2\delta}\log^2d$ as before and $Y = N(B_{\z}) \cap A_{\z}$ we get 
\begin{align*}
|Y|\le |A_{\z}|&\le 4K\left(\frac{40KL}{\b}\right)^{\z -1} \cdot \frac{450 e L \; \log^3 d}{d^{5/3 + 4\d}} n\leq \frac{n}{d^2}.\\
\end{align*}
Letting $\g_2 = \frac{500Le\log^3d}{\d d^{5/3 + 4\d}}$ and $U_2 = U_{2,a} \cup Y$ we see that w.h.p. $|U_2| \le \g_2 n$.

We can define heavy and light edges as in $U_1$ and \textbf{P1-P4} follows. 

\subsection{The verification of P1--P4: Constructing $U_3$}\label{U3}

Applying Lemma \ref{lemma3} and \ref{lemma4} separately with
$$\s=\frac{500eL\log^3d}{\d d^{5/3 + 4\d}} \text{ and }\th= \frac{5}{2} \text{ and }\D=3\th+\frac{\b}{L}\text{ and }\t=\frac{L\th}{\b},$$
we see that w.h.p.
\[
S_1 = \set{v \in U_2 : d_{U_2,1}(v) \ge 3\th + \b/L} \text{ satisfies }
|S_1|\leq \t \s n=\frac{1250L e \log^3d}{\d d^{7/3 + 5\d}}n.
\]

\[
S_2 = \set{v \in U_2 : d_{U_2,2}(v) \ge 3\th + \b/L} \text{ satisfies }
|S_2|\leq \t \s n=\frac{1250L e \log^3d}{\d d^{7/3 + 5\d}}n.
\]

\[
U_{3}'=\set{v\in U_2:d_{U_2}(v)\geq 3\D}\text{ satisfies }
|U_{3}'|\leq 2 \t\s n=\frac{2500L e  \log^3d}{\d d^{7/3 + 5\d}}n.
\]

We now construct $U_3 \supseteq U_3'$ by repeatedly adding vertices $y_1, y_2,..y_s$ of $U_2 \setminus U_3' $ such that $y_j$ is the lowest numbered vertex not in $ Y_j =  U_3' \cup \set{y_1, y_2....y_{j-1}}$ that has at least two neighbors in $Y_j$ in $G$. W.h.p., this process terminates with $j=t \le 39 |U_3'|$. We can apply Lemma \ref{lemma2} to see that w.h.p. this does not happen. Indeed, let $S_0 = U_3'$. We add vertices to $S_{j-1}$ to create the set $S_j$ iteratively. In this procedure we encounter two cases.
\begin{itemize}
\item $\exists x,w \in Y_j$ such that $(y_j, x, w) \in E(G)$. Then $S_j = S_{j-1} \cup \{y_j\}$. 
\item $\exists x,w \in Y_j \; \mbox{and} \; a,b \not \in Y_j$ such that $(x,a,y_j),(w,b,y_j) \in E(G)$. Then $S_j = S_{j-1} \cup \set{y_j, a, b}$
\end{itemize}

Note that we are adding at least 2 hyper-edges for every 3 vertices added to $S_t$. If $s \ge 39 |U_3'|$, then $e_3(S_s) \ge \frac{13}{20} |S_s|$. Apply Lemma \ref{lemma2} with $\theta = \frac{13}{20}$ and $\s = 120 \cdot \frac{2500L e \log^3d}{\d d^{7/3 + 5\d}}$ to conclude that $t \le 39 |U_3'|$.

Putting $U_3=U_3'\cup S_t$ we see that each vertex in $U_2\setminus U_3$ has at most one $G$-neighbor in $U_3$. We can therefore make the $U_3:(U_2\setminus U_3)$ edges light and satisfy P1, P2 and P4.
\subsection{The verification of P1-P5 : Construction of $U_i, i \ge 4$}

We repeat the argument of Section \ref{U3} to construct the rest of the sequence $U_0 \supseteq U_1 \supseteq U_2 \supseteq ... \supseteq U_l$. One can check that $|U_i| \le \frac{200L}{\b} |U_{i-1}|$. We choose $l$ so that $|U_l| \le \log n$. We can then easily prove that w.h.p. $S$ contains at most $|S|$ edges of $G$ whenever $|S| \le \log n$, implying P5.
\section{Final remarks}
We have shown lower bounds for the game chromatic number of random $k$-uniform hypergraphs and upper bounds for random 3-uniform hypergraphs. The lower bound is satisfactory in that it is within a constant factor of the chromatic number. The upper bound is most likely not tight, but it is still non-trivial in that it is much smaller than $d$. 

We conjecture that the upper bound can be reduced to within a constant factor of the lower bound. It would also be of interest to consider upper bounds for $k$-uniform hypergraphs, $k\geq 4$.


\begin{thebibliography}{99}
\bibitem{AM} D. Achlioptas and C. Moore, The chromatic number of random regular graphs,
In {\em Proceedings of the 7th International Workshop on Approximation Algorithms for Combinatorial Optimization
Problems and 8th International Workshop on Randomization and Computation,
volume 3122 of Lecture Notes in Computer Science}, Springer (2004) 219--228.

\bibitem{AN} D. Achlioptas and A. Naor, The two possible values of the chromatic number of a random graph,
{\em Annals of Mathematics} 162 (2005) 1333--1349.

\bibitem{AS} N. Alon and J. H. Spencer,
{\bf The probabilistic method}, $3^{rd}$ Ed., Wiley, New York, 2008.

\bibitem{BGKZ} T. Bartnicki, J.A. Grytczuk, H.A. Kierstead and X. Zhu,
The map coloring game, to appear in {\em American Mathematical Monthly}.

\bibitem{Bod} H.L. Bodlaender, On the complexity of some coloring games, {\em
 Internat. J. Found. Comput. Sci.} 2 (1991), 133--147.

\bibitem{BFS} T. Bohman, A.M. Frieze and B. Sudakov, The game chromatic number of random graphs,
{\em Random Structures and Algorithms} 32 (2008) 223--235.

\bibitem{Bo} B. Bollob\'as, The chromatic number of random graphs, {\em Combinatorica} 8 (1988),
49--55.

\bibitem{Bollcon} B. Bollob\'as, A probabilistic proof of an asymptotic
formula for the number of labelled regular graphs, {\em European Journal
on Combinatorics} 1 (1980) 311--316.

\bibitem{CFR} C. Cooper,  A.M. Frieze and B. Reed,
Random regular graphs of non-constant degree: connectivity and Hamilton cycles,
{\em Combinatorics, Probability and Computing} 11 (2002) 249--262.

\bibitem{R1} U. Faigle, U. Kern, H. Kierstead and W.T. Trotter, On the Game Chromatic Number of some Classes
of Graphs, {\em Ars Combinatoria} 35 (1993) 143--150.

\bibitem{FHL} A.M. Frieze, S. Haber and M. Lavrov, On the game chromatic number of sparse random graphs, {\em SIAM Journal of Discrete Mathematics} 27 (2013) 768-790.

\bibitem{FL} A.M. Frieze and T. {\L}uczak, On the independence and chromatic numbers of random regular
graphs, {\em Journal of Combinatorial Theory B} 54 (1992) 123--132.

\bibitem{Ga} M. Gardner, Mathematical Games,
{\em Scientific American} 244 (1981), no. 4, 18--26.

\bibitem{JLR} S. Janson, T. {\L}uczak and A. Ruci\'nski,
{\bf Random Graphs}, John Wiley and Sons, New York, 2000.

\bibitem{KPW} G. Kemkes, X. P\'erez-Gim\'enez and N. Wormald, On the chromatic number of random
$d$-regular graphs, {\em Advances in Mathematics} 223 (2010), 300--328.

\bibitem{KS} R. Keusch and A. Steger, The game chromatic number of dense random graphs, {\em Electronic Journal of Combinatorics} 21 (2014).

\bibitem{KV} J.H. Kim and V.H. Vu. Sandwiching random graphs: universality between random graph models.
{\em Advances in Mathematics} 188 (2004): 444--469.

\bibitem{KSchrom} M. Krivelevich and B. Sudakov, The Chromatic Number of random Hypergraphs, {\em Random Structures and Algorithms} 12 (1998) 381-403.  

\bibitem{Lu} T. {\L}uczak, The chromatic number of random graphs, {\em Combinatorica} 11 (1991),
45--54.

\bibitem{Worm} N. Wormald,
Models of random regular graphs, {\em Surveys in Combinatorics, J.D. Lamb and D.A. Preece, eds.},
London Mathematical Society Lecture Note Series, 276 (1999) 239--298.
Cambridge University Press, Cambridge, 1999
\end{thebibliography}
\end{document}